\documentclass[oneside,reqno]{amsart}

\usepackage{arxiv_preamble}

\title{The \texorpdfstring{$18\cdot 2^t+1$}{18*2^t+1} Triangle-Maximal Series of Straight Lines}
\author{Roman Parpalak}
\author{Denis Utkin}
\thanks{Email: \texttt{roman@parpalak.com} (R.\,Parpalak),
  \texttt{ud1@mail.ru} (D.\,Utkin).}
\date{April 23, 2026}

\begin{document}
\begin{abstract}
Given $n$ lines in general position in the plane, how many
bounded triangular faces can the arrangement have?
We construct a straight-line affine arrangement of $19$ lines
satisfying the conditions of the iterative construction by
Bartholdi, Blanc, and Loisel, thereby obtaining an infinite series
of straight-line arrangements attaining the maximum number of
bounded triangles for every $n=18\cdot 2^t+1$.
The conditions are verified by computer-assisted interval
and combinatorial checks.
A computational search over $n=21$, $23$, $27$ lines provides strong
evidence against the existence of further base configurations
compatible with the known iterative constructions, but reveals
arrangements allowing a single iterative step that yield arrangements
of $41$ and $45$ lines with $533$ and $645$ bounded triangles,
respectively, each matching the upper bound.
\end{abstract}

\maketitle

\section{Introduction}

The Kobon triangle problem~\cite{oeis} asks for the maximum possible number of bounded triangular
faces determined by $n$ lines in the real affine plane. A classical
counting argument---each of the $n$ lines has at most $n-2$ bounded segments,
and each triangle consumes three of them---gives the upper bound
$\lfloor n(n-2)/3 \rfloor$.
For small~$n$, arrangements matching this bound can often be found by hand or
by computer search. For large~$n$, the construction of F{\"u}redi and
Pal{\'a}sti~\cite{furedi1984} asymptotically approaches the upper bound, and for
certain infinite but sparse series of~$n$ the exact maximum is known.
Constructions that produce such families are rare and especially valuable.

The general idea behind infinite series is iterative doubling: one
starts with a \emph{base configuration} of $n$ lines that is optimal and
satisfies certain geometric conditions, then inserts $n-1$ additional lines
near a distinguished line or at infinity to obtain an arrangement of $2n-1$ lines
that is again optimal and satisfies analogous conditions,
so the process can be repeated ad infinitum.

Two constructions of this type have been described in the literature,
due to Forge and Ram{\'i}rez Alfons{\'i}n~\cite{forge1998} and to
Bartholdi, Blanc, and Loisel~\cite{bartholdi2007}. They both yield several
affine series, including straight-line series for
\begin{equation}\label{eq:known-series}
  n = 2\cdot 2^t + 1,
  \qquad
  n = 6\cdot 2^t + 1,
  \qquad
  n = 14\cdot 2^t + 1,
  \qquad t\ge 0,
\end{equation}
and a series of affine pseudoline arrangements for
\begin{equation*}
  n = 18\cdot 2^t + 1.
\end{equation*}
A pseudoline arrangement is an arrangement of curves, not necessarily
straight, each pair crossing exactly once.
This last family has $n\equiv 1\pmod 6$, and
\cite[Theorem~1.3]{bartholdi2007} proves that the exact affine upper
bound
\begin{equation*}
  \left\lfloor \frac{n(n-2)}{3}\right\rfloor
  =
  \frac{n(n-2)-2}{3}
\end{equation*}
is attained by the pseudoline arrangements of this series.

It has remained open whether this last series can be realized by straight lines.
The main contribution of this paper is the discovery of a simple affine
arrangement of $19$ straight lines satisfying the
hypotheses of~\cite[Proposition~3.1]{bartholdi2007}. This arrangement serves as
the straight-line base configuration for the iterative doubling in the case
$n=18\cdot 2^t+1$. Since the base configuration achieves the optimal triangle
count for $19$ lines, the iteration applies directly and yields the optimal
straight-line family for all $n=18\cdot 2^t+1$.

Finding the base configuration requires a systematic search. The number of
combinatorially distinct optimal arrangements grows rapidly with~$n$, while the
conditions imposed by the iterative constructions
of~\cite{forge1998,bartholdi2007} are quite restrictive: a candidate
must simultaneously be stretchable and have a specific crossing pattern
along the distinguished line or a specific pattern of slopes. We therefore organized the
computation into several stages: enumeration of optimal pseudoline arrangements,
classification up to projective equivalence, testing stretchability via
numerical optimization, and checking compatibility with the iterative constructions.

We also applied this search procedure to $n=21$, $23$, $25$, $27$ ($n=25$ restricted to
arrangements containing a bounded pentagonal face none of whose edges belongs
to a bounded triangle). No arrangement compatible with the iterative constructions
of~\cite{forge1998,bartholdi2007} was found, providing strong computational evidence
that no further infinite straight-line families exist for
these values. For $n=21$, $23$, and $25$,
however, we found arrangements allowing a single iterative step,
yielding optimal affine arrangements for $n=41$ and $n=45$
and an affine arrangement of $49$ lines that, together with the line at infinity,
attains the projective upper bound (Appendix~\ref{sec:appendix-one-step}).

\begin{figure}[!ht]
\centering
\input{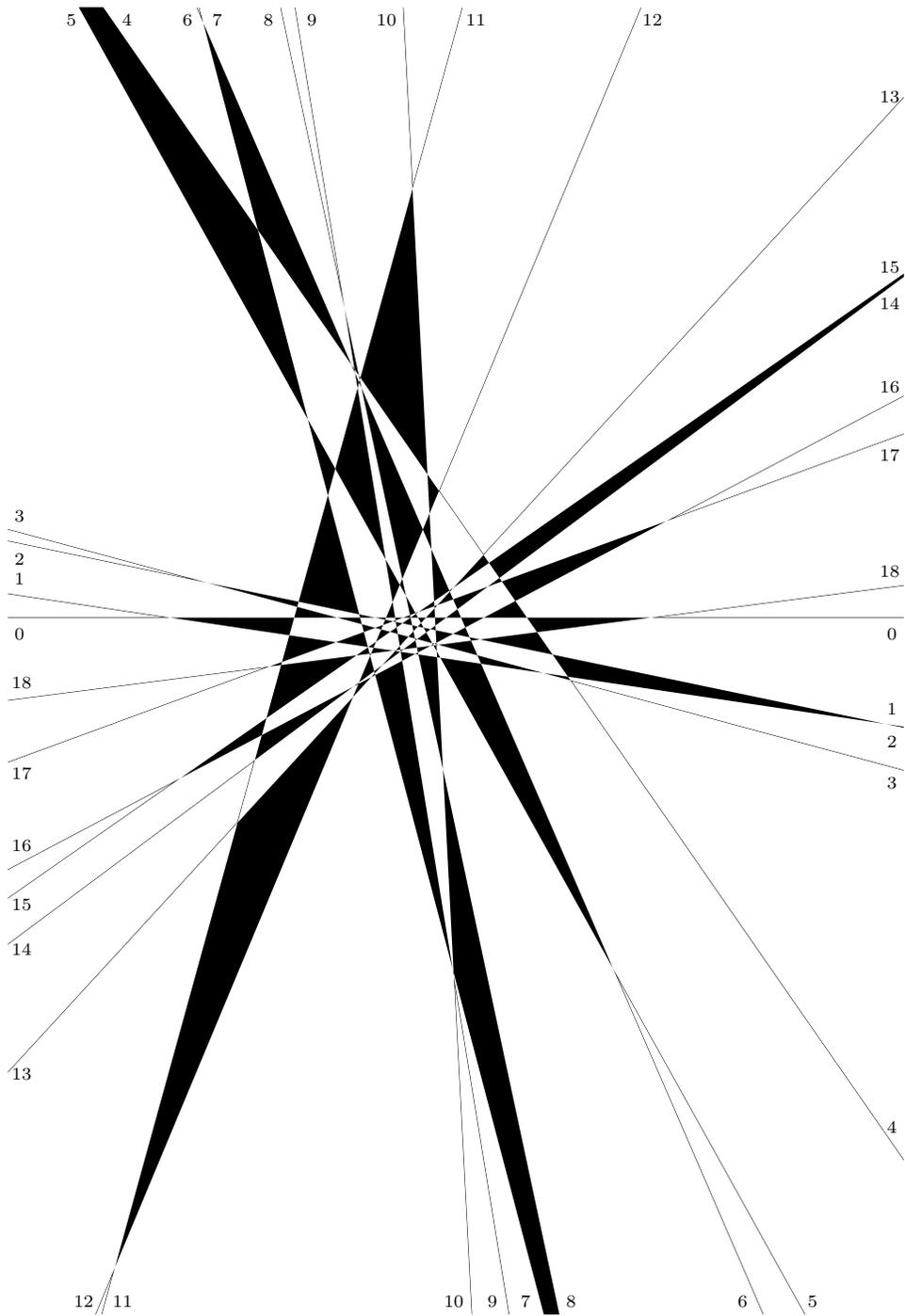}
\caption{The base configuration for $n=19$ straight lines.
}
\label{fig:base-configuration}
\end{figure}

The structure of the paper is as follows. In
Section~\ref{sec:input-statement} we recall the hypotheses of the iterative
construction from~\cite{bartholdi2007}. Section~\ref{sec:combinatorics} defines
the $\mathrm{O}$-matrix encoding of the combinatorial type.
The straight-line base configuration is
established in Section~\ref{sec:main-result}, and
Section~\ref{sec:series} applies the iterative construction to derive the
infinite straight-line family. Section~\ref{sec:search} describes the computer
search that produced the base configuration, and
Section~\ref{sec:discussion} discusses the search for base
configurations at other values of~$n$.
Appendix~\ref{sec:certified-package} describes the computer-assisted
verification, and Appendix~\ref{sec:appendix-one-step} records one-step
iterative examples at $n=41$, $45$, and $49$.

\section{Definitions and Setup for the Iterative Construction}\label{sec:input-statement}

An arrangement of lines is called \emph{simple} if no point belongs to more
than two of its lines. We work with simple affine arrangements in
$\mathbb{R}^2$ and write $a_3(\cA)$ for the number of bounded triangular
faces of an arrangement~$\cA$. A distinguished line $Y_0$ \emph{touches} a
triangle if it contains one of its sides.

Throughout the paper we use the best affine upper bounds for odd $n$
from \cite[Theorem~1]{blanc2011}: for every simple affine arrangement
of $n$ lines or pseudolines,
\begin{equation}\label{eq:upper-bound}
  a_3(\cA)\le
  \begin{cases}
    (n(n-2)-2)/3 & \text{if } n\equiv 1\pmod{6}, \\
    n(n-2)/3 & \text{if } n\equiv 3,5\pmod{6}.
  \end{cases}
\end{equation}

We use the following affine formulation of the iterative step from
\cite[Proposition~3.1 and Remark~3.2]{bartholdi2007}.

\begin{proposition}[Affine iterative step]\label{prop:iterative-step}
Let $n\ge 2$ be an even number, and let
\[
  \cA=\{Y_0,L_1,\dots,L_n\}
\]
be a simple affine arrangement of $n+1$ lines, given by the equations
\[
  Y_0:\ y=0,
  \qquad
  L_i:\ y=m_i(x-a_i).
\]
Assume that
\[
  \{a_1,\dots,a_{n-2}\}=\{\pm\tan(k\pi/n)\mid k=1,\dots,n/2-1\},
\]
and
\[
  -\frac{1}{n}<a_{n-1}<0<a_n<\frac{1}{n},
\]
and that $Y_0$ touches exactly $n-1$ triangles of the arrangement $\cA$.
Then there exist $n$ lines $M_1,\dots,M_n$ such that
the affine arrangement
\[
  \cB=\{Y_0,L_1,\dots,L_n,M_1,\dots,M_n\}
\]
of $2n+1$ lines is simple and has exactly $n^2$ more
triangles than $\cA$, and $Y_0$ touches exactly $2n-1$ triangles of $\cB$.
\end{proposition}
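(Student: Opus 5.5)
The plan is to make the construction explicit by a projective change of coordinates, and then to count triangles locally in the region where the new lines live.

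First I would apply a projective transformation $\phi$ that keeps $Y_0$ but moves it to the line at infinity, for instance $(x,y)\mapsto (x/y,\,1/y)$ suitably normalised. A point $(a_i,0)$ of $Y_0$ then becomes a direction at infinity. Each $L_i$ becomes a line whose slope is determined by $a_i$, and the slopes are roughly $\arctan$-equidistributed. The hypothesis $\{a_1,\dots,a_{n-2}\}=\{\pm\tan(k\pi/n)\}$ says exactly that these $n-2$ image lines are parallel to the side lines of a regular $n$-gon, except the two sides parallel to the coordinate axes. The two lines $L_{n-1},L_n$, whose intercepts lie in $(-1/n,1/n)$ on opposite sides of $0$, play the role of the two missing directions. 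They are perturbed slightly so that they straddle the horizontal direction.

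Next I would take the $n$ side lines of a regular $n$-gon, scale them by a small factor $\varepsilon$, and place them in a small disc. I would locate this disc inside the unbounded cell, or near the vertex, of $\phi(\cA)$ that corresponds to the triangles touching $Y_0$. Pulling these lines back by $\phi^{-1}$ gives the lines $M_1,\dots,M_n$, which lie very close to $Y_0$. Simplicity of $\cB$ is then an open condition. It holds for small $\varepsilon$ after a generic perturbation, provided the slope conditions are strict; this is where $|a_{n-1}|,|a_n|<1/n$ is used.

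For the count, I would use the segment-counting argument from the introduction. Each triangle of $\cA$ that does not touch $Y_0$ stays a triangle, because the new lines only cut a thin neighbourhood of $Y_0$. The $n-1$ triangles touching $Y_0$ are each subdivided. Within the thin strip I would list the new bounded faces cut out by the $M_j$, $Y_0$ and the $L_i$, and check that exactly $n^2$ more triangles appear in total, of which exactly $2n-1$ have a side on $Y_0$. The cleanest route is to compare with the regular $2n$-line pattern. The crossings of the $L_i$ with $Y_0$ alternate with the crossings of the $M_j$ with $Y_0$, in the order forced by the tangent values. In this interleaving each new line $M_j$ has all of its bounded segments used as triangle sides except for the prescribed defect. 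The final check is that the upper bound \eqref{eq:upper-bound} then forces the exact count.

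The main obstacle is this last combinatorial step. One must show that the cyclic order of the $2n$ slopes (the $n$ from $\cA$ and the $n$ new ones), together with the order of the intersection points along $Y_0$, produces only triangles near $Y_0$ and no quadrilaterals. I would first verify it directly for the regular configuration in the $\phi$-picture, where the symmetry reduces it to one fundamental sector. I would then argue that the arrangement's face structure near $Y_0$ depends only on these orders and is stable under the $\varepsilon$-perturbation.
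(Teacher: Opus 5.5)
The paper does not prove this proposition itself. It imports it from \cite[Proposition~3.1 and Remark~3.2]{bartholdi2007}, where the $M_j$ are written down explicitly and the faces near $Y_0$ are counted. Your sketch tries to replace that argument, and it has a genuine gap.

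\textbf{The construction is wrong as stated.}
\begin{itemize}
\item Since $n$ is even, the $n$ side lines of a regular $n$-gon form $n/2$ parallel pairs. Because $\phi^{-1}$ sends the line at infinity to $Y_0$, each pair pulls back to two lines $M_j,M_{j'}$ concurrent with $Y_0$. So $\cB$ has triple points, and the $M_j$ meet $Y_0$ in at most $n/2$ points. These cannot alternate with the $n$ points $a_i$, as you later assume.
\item Relatedly, the directions of the $\phi(L_i)$ are multiples of $\pi/n$. These are the side directions of a regular $2n$-gon, not an $n$-gon.
\item Lines whose crossings lie in a small disc at finite distance in the $\phi$-picture pull back to lines crossing in a small region away from $Y_0$. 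Lines close to $Y_0$ correspond to lines near infinity in the $\phi$-picture (for instance tangent to a huge circle), i.e.\ to lines of tiny slope.
\item A generic perturbation does not rescue this, because the triangle count depends on how the degeneracies are resolved.
\end{itemize}
The natural choice is $n$ lines of tiny slope with $Y_0$-intercepts $\pm\tan((2k-1)\pi/(2n))$, $k=1,\dots,n/2$. These are the intercepts Remark~\ref{rem:iteration-condition} needs, and their $\phi$-directions are the odd multiples of $\pi/(2n)$, interleaving those of the $L_i$.

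\textbf{The central count is missing.} The substance of the proposition is the exact local count near $Y_0$:
\begin{itemize}
\item what becomes of each of the $n-1$ triangles of $\cA$ on $Y_0$ once the new lines cut through it;
\item that exactly $n^2$ triangles are gained in net;
\item that exactly $2n-1$ triangles of $\cB$ rest on $Y_0$.
\end{itemize}
You call this ``the main obstacle'' and give no argument for it. Your fallback, that the bound \eqref{eq:upper-bound} ``forces the exact count'', does not work. $\cA$ is not assumed optimal, the claim $a_3(\cB)=a_3(\cA)+n^2$ is for every admissible $\cA$, and an upper bound cannot supply the matching lower bound. Reducing to one fundamental sector by symmetry is not available either. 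Only the intercepts $a_i$ are regular; the slopes $m_i$ and the special pair $L_{n-1},L_n$ are arbitrary.

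As it stands this is a heuristic picture, not a proof. Either cite \cite{bartholdi2007}, as the paper does, or write down the $M_j$ explicitly and carry out the face count in the strip around $Y_0$.
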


\begin{remark}[Iteration condition]\label{rem:iteration-condition}
If moreover $|a_{n-1}|$ and $|a_n|$ are smaller than $1/(2n)$, then the new
arrangement $\cB$ again satisfies the hypotheses of the proposition. Hence the
construction can be iterated provided these two special parameters are chosen
arbitrarily small.
\end{remark}

For the base configuration studied in this paper
(see Figure~\ref{fig:base-configuration}), the relevant value is $n=18$,
so it has $19$ lines in total. In Section~\ref{sec:main-result},
we prove the existence of a straight-line affine arrangement
$
  \{Y_0,L_1,\dots,L_{18}\}
$
with
\[
  \{a_1,\dots,a_{18}\}=\{\pm\tan(k\pi/18)\mid k=1,\dots,8\} \cup \{-\eps,+\eps\},
\]
where $\eps>0$ is small enough and the hypotheses of the proposition and
remark above hold.

\section{Combinatorial Encoding by the \texorpdfstring{$\mathrm{O}$}{O}-Matrix}\label{sec:combinatorics}

For a simple affine arrangement of $n$ lines, choose a generic directed line
$\gamma$ (the \emph{sweep line}) that is not parallel to any line of the
arrangement and whose initial position is sufficiently far from all
intersection points. Label the lines $L_0,L_1,\dots,L_{n-1}$
(or simply $0, 1, \dots, n-1$) in the order in which
$\gamma$ meets them at its initial position. As $\gamma$ sweeps
through the plane, each intersection point $L_i\cap L_j$ is encountered
exactly once; append~$j$ to the row of~$L_i$ and~$i$ to the row of~$L_j$
at that moment. The result is an $n\times(n-1)$ matrix
which we call the $\mathrm{O}$-matrix (order matrix) of the arrangement.

The numbering $L_0,L_1,\dots,L_{n-1}$ induced by $\gamma$ coincides with the
increasing angular order of the lines; the same numbering is used in the
parameter tables and figures throughout the paper.

This construction applies to pseudoline arrangements as well; in that case
the $\mathrm{O}$-matrix encodes the same data as a rank-$3$ oriented matroid
(equivalently, an allowable sequence of permutations).
We use the $\mathrm{O}$-matrix since its row-based format is convenient: each row
directly records the sequence of crossings along a line, while adjacency of
labels within rows detects bounded faces
(Figure~\ref{fig:omatrix-example}).

\begin{figure}[ht]
\centering
\definecolor{triEdgeA}{RGB}{220,38,38}
\definecolor{triEdgeB}{RGB}{56,163,28}
\definecolor{triEdgeC}{RGB}{136,48,212}
\newcommand{\edgeacolor}[1]{\textcolor{triEdgeA}{\mathbf{#1}}}
\newcommand{\edgebcolor}[1]{\textcolor{triEdgeB}{\mathbf{#1}}}
\newcommand{\edgeccolor}[1]{\textcolor{triEdgeC}{\mathbf{#1}}}
\begin{minipage}[t]{0.32\linewidth}

\begin{tikzpicture}[scale=1.2,baseline=2.9cm]
  \coordinate (L1a) at (-0.8,2.5);
  \coordinate (L1b) at (1.6,0);
  \coordinate (L2a) at (0.1,2.5);
  \coordinate (L2b) at (1.4,0);
  \coordinate (L3a) at (0.35,2.5);
  \coordinate (L3b) at (-0.45,0);
  \coordinate (L4a) at (1.15,2.5);
  \coordinate (L4b) at (-0.6,0);
  \coordinate (P12) at (intersection of L1a--L1b and L2a--L2b);
  \coordinate (P13) at (intersection of L1a--L1b and L3a--L3b);
  \coordinate (P14) at (intersection of L1a--L1b and L4a--L4b);
  \coordinate (P23) at (intersection of L2a--L2b and L3a--L3b);
  \coordinate (P24) at (intersection of L2a--L2b and L4a--L4b);
  \coordinate (P34) at (intersection of L3a--L3b and L4a--L4b);
  \fill[gray!20] (P13) -- (P14) -- (P34) -- cycle;
  \draw (L1a) -- (L1b) node[pos=0.04,left,font=\small]{$0$};
  \draw (L2a) -- (L2b) node[pos=0.04,left,font=\small]{$1$};
  \draw (L3a) -- (L3b) node[pos=0.04,right,font=\small]{$2$};
  \draw (L4a) -- (L4b) node[pos=0.04,right,font=\small]{$3$};
  \draw[triEdgeA, line width=1.6pt] (P13) -- (P14);
  \draw[triEdgeB, line width=1.6pt] (P13) -- (P34);
  \draw[triEdgeC, line width=1.6pt] (P14) -- (P34);
  \foreach \p in {P12,P13,P14,P23,P24,P34}
    \fill (\p) circle (0.8pt);
  \draw[dashed] (-1.0,2.65) -- ++(2.5,0) node[right]{$\gamma$}
    ++(0.2,-0.2) edge[solid, ->] ++(0,-0.5);
\end{tikzpicture}
\end{minipage}
\hfill
\begin{minipage}[t]{0.32\linewidth}
$\mathrm{O}$-matrix

\medskip
\renewcommand{\arraystretch}{1.2}
$\begin{array}{c|ccc}
  & 1\text{st} & 2\text{nd} & 3\text{rd} \\
\hline
  L_0 & \edgeacolor{2} & \edgeacolor{3} & 1 \\
  L_1 & 2 & 3 & 0 \\
  L_2 & 1 & \edgebcolor{0} & \edgebcolor{3} \\
  L_3 & 1 & \edgeccolor{0} & \edgeccolor{2}
\end{array}$

\end{minipage}
\hfill
\begin{minipage}[t]{0.32\linewidth}
\raggedright
\setlength{\baselineskip}{1.22\baselineskip}
The shaded triangle $(L_0,L_2,L_3)$ corresponds to
the adjacent pairs:\\
$\edgeacolor{2},\edgeacolor{3}$ in row~$L_0$;\;\\
$\edgebcolor{0},\edgebcolor{3}$ in row~$L_2$;\;\\
$\edgeccolor{0},\edgeccolor{2}$ in row~$L_3$.
\end{minipage}
\caption{A simple arrangement of four lines, its $\mathrm{O}$-matrix, and the
correspondence between faces and adjacency in the matrix.}
\label{fig:omatrix-example}
\end{figure}

\begin{lemma}[$\mathrm{O}$-matrix criterion for bounded faces]\label{lem:face-criterion}
Let $(L_{i_1},\dots,L_{i_k})$ be a cyclically ordered list of distinct
pseudolines in a simple arrangement. Then
these pseudolines bound a bounded $k$-gonal face if and only if for every
$s=1,\dots,k$ (with $i_0=i_k$ and $i_{k+1}=i_1$)
the two labels $i_{s-1}$ and $i_{s+1}$ appear in adjacent positions in the row
of $L_{i_s}$ of the $\mathrm{O}$-matrix:
$$
\mathrm{O}[i_s] = (\dots, i_{s\pm1}, i_{s\mp1}, \dots),
\qquad s=1,\dots,k.
$$
\end{lemma}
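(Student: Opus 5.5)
The plan is to translate "adjacent in row $\mathrm{O}[i_s]$" into a statement about edges along the pseudoline $L_{i_s}$, and then to assemble consecutive edges into the boundary of a face.

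First, the row $\mathrm{O}[i]$ lists the crossings of $L_i$ with the other pseudolines in the order they occur along $L_i$. Indeed, the sweep meets every pseudoline exactly once at each time and moves monotonically across the plane, so the crossings on $L_i$ are recorded in their order along $L_i$. Two labels $j,l$ are therefore adjacent in $\mathrm{O}[i]$ exactly when the crossings $L_i\cap L_j$ and $L_i\cap L_l$ are the two endpoints of a single bounded edge $e$ of the arrangement lying on $L_i$. By simplicity, no third crossing lies between them.

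For the forward direction, let $F$ be a bounded $k$-gon with sides on $L_{i_1},\dots,L_{i_k}$ in this cyclic order. The side on $L_{i_s}$ is a bounded edge whose endpoints are the vertices $L_{i_s}\cap L_{i_{s-1}}$ and $L_{i_s}\cap L_{i_{s+1}}$. By the observation above, $i_{s-1}$ and $i_{s+1}$ are adjacent in $\mathrm{O}[i_s]$, in one of the two orders.

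For the converse, suppose the adjacency holds for every $s$, and let $e_s$ be the bounded edge on $L_{i_s}$ from $v_{s-1,s}=L_{i_{s-1}}\cap L_{i_s}$ to $v_{s,s+1}$. Consecutive edges $e_s$ and $e_{s+1}$ share the vertex $v_{s,s+1}$. Hence $e_1,\dots,e_k$ form a closed polygonal curve $C$ made of edges of the arrangement.

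The main obstacle is to show that $C$ is the boundary of a single face rather than some closed walk enclosing several faces. At the simple vertex $v_{s,s+1}$, four edges meet, on $L_{i_s}$ and $L_{i_{s+1}}$. The pair $e_s,e_{s+1}$ consists of one half from each pseudoline, so it bounds exactly one of the four quadrants, that is, one face corner $Q_s$. I would show that $Q_{s}$ and $Q_{s+1}$ lie on the same side of $e_{s+1}$. Since no other pseudoline crosses the interior of $e_{s+1}$, the face adjacent to $e_{s+1}$ on a given side is uniquely determined. It would follow that all corners $Q_s$ belong to one face $F$, whose boundary contains $e_1,\dots,e_k$ and hence equals $C$.

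To prove that $Q_s$ and $Q_{s+1}$ are on the same side, I would argue by contradiction. If the corners alternated sides, $C$ would turn in opposite directions at consecutive vertices. Since the $L_{i_s}$ are distinct pseudolines, each pair crossing once, two such edges cannot bound a consistent closed curve with alternating turns without some pair crossing twice, or without a pseudoline meeting the Jordan interior of $C$, which contradicts the adjacency conditions. If this turns out messy, the fallback is the local argument: the face on one side of $e_s$ near $v_{s,s+1}$ is exactly the corner $Q_s$, so the face is the same one.

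Finally, the face $F$ has $C$ as its boundary, which is compact, so $F$ is bounded and has exactly $k$ sides, one on each of the distinct pseudolines $L_{i_1},\dots,L_{i_k}$.
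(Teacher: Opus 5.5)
\textbf{Verdict: genuine gap in the converse.} Your outline follows the paper's. Adjacency in $\mathrm{O}[i_s]$ means the segment of $L_{i_s}$ between the two crossings is an edge, and the $k$ edges close up into a curve $C$. The paper stops there, simply asserting that this ``gives a closed polygonal boundary.'' Your first paragraph, the forward direction and the construction of $C$ are fine. You rightly single out the remaining step as the crux, but you do not resolve it.

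The claim that $Q_s$ and $Q_{s+1}$ lie on the same side of $e_{s+1}$ is backed only by an appeal to ``alternating turns'' and to a pair of pseudolines ``crossing twice.'' That is not an argument. A simple closed curve can turn in opposite directions at consecutive vertices, so some specific property of the edges $e_t$ has to be invoked, and you never say which.

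Your remark that a pseudoline meeting the Jordan interior of $C$ would contradict adjacency is the right idea, but it is neither justified nor used. You also never check that $C$ is simple, so ``Jordan interior'' is not yet available. The fallback is circular. Knowing that the face on one side of $e_s$ near $v_{s,s+1}$ is the corner $Q_s$ says nothing about which side of $e_{s+1}$ the corner $Q_{s+1}$ lies on, and that is exactly the question.

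\textbf{The missing step} is to record how $C$ meets the arrangement. Each $e_t$ has no crossing in its interior. By simplicity, its endpoints $v_{t-1,t}$ and $v_{t,t+1}$ lie on no third pseudoline. Hence $L_{i_r}\cap e_t=\emptyset$ unless $t\in\{r-1,r,r+1\}$. This gives three facts:
\begin{itemize}
\item non-consecutive edges are disjoint, so $C$ is a Jordan curve;
\item $C\cap L_{i_r}=e_r$ for each $r$;
\item $C$ misses every other pseudoline.
\end{itemize}

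This already proves your side claim. The arc $C\setminus e_{s+1}$ is connected and disjoint from $L_{i_{s+1}}$. So $e_s$ and $e_{s+2}$, and with them $Q_s$ and $Q_{s+1}$, lie on the same side of $L_{i_{s+1}}$.

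It is shorter, though, to skip corners entirely. For every pseudoline $L$ of the arrangement, each component of $L\setminus C$ is connected, unbounded and disjoint from $C$, so it lies in the exterior of $C$. Thus the Jordan interior $U$ meets no pseudoline and lies in a single face $F$. Since $F$ is connected and disjoint from $C$, we get $F=U$. This is a bounded face with boundary exactly $C$, i.e.\ a $k$-gon with sides on $L_{i_1},\dots,L_{i_k}$ in this cyclic order.
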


\begin{proof}
Along $L_{i_s}$, the segment between $L_{i_s}\cap L_{i_{s-1}}$ and
$L_{i_s}\cap L_{i_{s+1}}$ is an edge of a bounded face precisely when it
contains no other crossing, i.e., $i_{s-1}$ and $i_{s+1}$ are adjacent in the
row of $L_{i_s}$. Imposing this for every $s$ gives a closed polygonal
boundary.
\end{proof}

\section{The Base Configuration}\label{sec:main-result}

In this section we present the base configuration of $19$ straight lines
(Figure~\ref{fig:base-configuration}) and establish its properties that allow us
to apply the iterative step from Proposition~\ref{prop:iterative-step}.

\begin{figure}[ht]
\centering
\resizebox{1.0\textwidth}{!}{\input{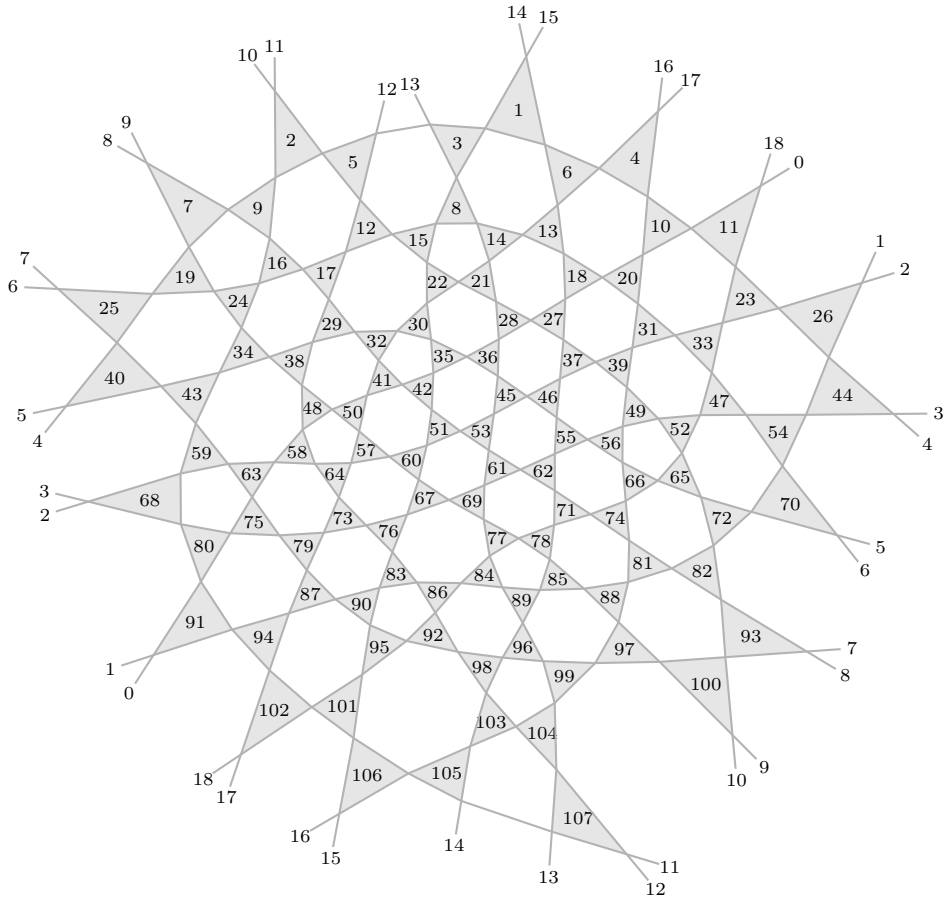}}
\caption{Pseudoline diagram of the base configuration with all $107$
bounded triangles numbered. Lines are labeled $0,1,\dots,18$.}
\label{fig:pseudolines}
\end{figure}

The base configuration is the normalized family of straight lines
\begin{equation}\label{eq:seed19-normalized-family}
  Y_0:\ y=0,
  \qquad
  L_i:\ y=m_i(x-a_i),
  \qquad i=1,\dots,18,
\end{equation}
where $a_8=-\eps$, $a_{15}=+\eps$, and the remaining $a_i$ values are
$\pm\tan(k\pi/18)$, $k=1,\dots,8$. The slopes $m_i$ and the parameter $\eps$
may vary in the following range:
\providecommand{\SeedNineteenEpsLower}{10^{-12}}
\providecommand{\SeedNineteenEpsUpper}{10^{-4}}
\providecommand{\SeedNineteenDeltaM}{10^{-4}}
\refstepcounter{equation}\label{eq:seed19-parameter-box}
\[
  \eps \in \left(0,\SeedNineteenEpsUpper\right),
  \qquad
  m_i \in \left[m_i^\ast-\delta_m,m_i^\ast+\delta_m\right],
  \qquad
  \delta_m = \SeedNineteenDeltaM.
  \tag{\theequation}
\]
The exact values of slope centers $m_i^\ast$ and $x$-intercepts $a_i$ are:
\begingroup
\[
\renewcommand{\arraystretch}{1.1}
\begin{array}{ll@{\qquad}ll}
 m_{1}^\ast = -0.18006, & a_{1} = -\tan(8\pi/18) & m_{10}^\ast = -23.07892, & a_{10} = +\tan(3\pi/18) \\
 m_{2}^\ast = -0.25231, & a_{2} = -\tan(4\pi/18) & m_{11}^\ast = 4.39005, & a_{11} = -\tan(7\pi/18) \\
 m_{3}^\ast = -0.32584, & a_{3} = -\tan(6\pi/18) & m_{12}^\ast = 2.90204, & a_{12} = -\tan(3\pi/18) \\
 m_{4}^\ast = -1.74521, & a_{4} = +\tan(7\pi/18) & m_{13}^\ast = 1.31845, & a_{13} = +\tan(2\pi/18) \\
 m_{5}^\ast = -2.17941, & a_{5} = +\tan(1\pi/18) & m_{14}^\ast = 0.90192, & a_{14} = +\tan(4\pi/18) \\
 m_{6}^\ast = -2.79557, & a_{6} = +\tan(5\pi/18) & m_{15}^\ast = 0.84501, & a_{15} = +\eps \\
 m_{7}^\ast = -4.59586, & a_{7} = -\tan(5\pi/18) & m_{16}^\ast = 0.64068, & a_{16} = +\tan(6\pi/18) \\
 m_{8}^\ast = -5.67203, & a_{8} = -\eps & m_{17}^\ast = 0.44393, & a_{17} = -\tan(1\pi/18) \\
 m_{9}^\ast = -7.39905, & a_{9} = -\tan(2\pi/18) & m_{18}^\ast = 0.15548, & a_{18} = +\tan(8\pi/18) \\
\end{array}
\]
\endgroup

The corresponding $\mathrm{O}$-matrix is listed in \eqref{eq:seed19-omatrix},
and Figure~\ref{fig:pseudolines} shows the associated pseudoline diagram.

\refstepcounter{equation}
\begingroup
\noindent\hfill
\begin{minipage}[c]{0.72\linewidth}
\footnotesize
\begin{verbatim}
[
  [1, 11, 3, 7, 2, 12, 9, 17, 8, 15, 5, 13, 10, 14, 6, 16, 4, 18],
  [0, 11, 17, 7, 15, 12, 18, 13, 14, 9, 16, 8, 10, 5, 6, 3, 4, 2],
  [3, 11, 7, 0, 12, 17, 9, 15, 8, 13, 5, 14, 10, 16, 6, 18, 4, 1],
  [2, 11, 0, 7, 17, 12, 15, 9, 13, 8, 14, 5, 16, 10, 18, 6, 1, 4],
  [5, 7, 6, 9, 8, 11, 10, 12, 13, 15, 14, 17, 16, 0, 18, 2, 1, 3],
  [4, 7, 11, 9, 12, 8, 17, 15, 0, 13, 2, 14, 3, 16, 18, 10, 1, 6],
  [7, 4, 9, 11, 8, 12, 10, 15, 13, 17, 14, 0, 16, 2, 18, 3, 1, 5],
  [6, 4, 5, 11, 2, 0, 3, 17, 1, 15, 18, 12, 14, 13, 16, 9, 10, 8],
  [9, 4, 11, 6, 12, 5, 17, 0, 15, 2, 13, 3, 14, 18, 16, 1, 10, 7],
  [8, 4, 6, 11, 5, 12, 0, 17, 2, 15, 3, 13, 18, 14, 1, 16, 7, 10],
  [11, 4, 12, 6, 15, 17, 13, 0, 14, 2, 16, 3, 18, 5, 1, 8, 7, 9],
  [10, 4, 8, 6, 9, 5, 7, 2, 3, 0, 1, 17, 18, 15, 16, 14, 13, 12],
  [4, 10, 6, 8, 5, 9, 0, 2, 17, 3, 15, 1, 18, 7, 14, 16, 13, 11],
  [4, 15, 6, 17, 10, 0, 5, 2, 8, 3, 9, 18, 1, 14, 7, 16, 12, 11],
  [15, 4, 17, 6, 0, 10, 2, 5, 3, 8, 18, 9, 1, 13, 7, 12, 16, 11],
  [14, 4, 13, 6, 10, 17, 5, 0, 8, 2, 9, 3, 12, 1, 7, 18, 11, 16],
  [17, 4, 0, 6, 2, 10, 3, 5, 18, 8, 1, 9, 7, 13, 12, 14, 11, 15],
  [16, 4, 14, 6, 13, 10, 15, 5, 8, 0, 9, 2, 12, 3, 7, 1, 11, 18],
  [0, 4, 2, 6, 3, 10, 5, 16, 8, 14, 9, 13, 1, 12, 7, 15, 11, 17]
]
\end{verbatim}

\end{minipage}
\hfill(\theequation)\label{eq:seed19-omatrix}
\endgroup

\begin{theorem}[Straight-line base configuration at $n=19$]\label{thm:main-seed}
For every choice of parameters in \eqref{eq:seed19-parameter-box},
the normalized family
\eqref{eq:seed19-normalized-family} is a simple affine arrangement of $19$
straight lines satisfying:
\begin{enumerate}[label=(\arabic*)]
\item $Y_0$ is the line $y=0$;
\item each $L_i$ has the form $y=m_i(x-a_i)$;
\item
$
  \{a_1,\dots,a_{18}\}
  =
  \{\pm\tan(k\pi/18)\mid k=1,\dots,8\}\cup\{-\eps,+\eps\}
$, with $\eps<1/36$;
\item $Y_0$ touches exactly $17$ triangles;
\item the arrangement has exactly $107$ bounded triangular faces.
\end{enumerate}
\end{theorem}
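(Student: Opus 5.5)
Items (1)--(3) hold by construction: the family \eqref{eq:seed19-normalized-family} is defined with $Y_0: y=0$ and $L_i: y=m_i(x-a_i)$, and the listed $a_i$ give exactly $\{\pm\tan(k\pi/18)\}_{k=1}^{8}\cup\{\pm\eps\}$ with $\eps<10^{-4}<1/36$. All the work is in simplicity and items (4)--(5). The plan is to show that the combinatorial type is \emph{constant} on the whole parameter box \eqref{eq:seed19-parameter-box} and equals the $\mathrm{O}$-matrix \eqref{eq:seed19-omatrix}. Once that is done, simplicity and both counts are purely combinatorial statements about \eqref{eq:seed19-omatrix}, and Lemma~\ref{lem:face-criterion} settles them.

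\textbf{Step 1: the combinatorial type is fixed.} The $\mathrm{O}$-matrix is determined by two kinds of data.
\begin{enumerate}
\item The slope order. The sweep direction is fixed, and it must give the labelling $0,\dots,18$ by angle. The intervals $[m_i^\ast\pm10^{-4}]$ are pairwise disjoint, and no $m_i$ is $0$, so the slope order is the same throughout the box.
\item For each line $L_i$ and each pair $j,k$, the sign of the orientation determinant telling whether $L_i\cap L_j$ precedes $L_i\cap L_k$ along $L_i$. Two lines $L_i,L_j$ meet at
\[
x_{ij}=\frac{m_ia_i-m_ja_j}{m_i-m_j},
\]
and $Y_0$ meets $L_i$ at $x=a_i$. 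So the order along a line is a comparison of such rational functions of $(m,a,\eps)$.
\end{enumerate}
I will evaluate every relevant difference $x_{ij}-x_{ik}$ (or the equivalent sweep-coordinate difference) in interval arithmetic over the box and verify that it excludes $0$ with the sign dictated by \eqref{eq:seed19-omatrix}. Nonzero signs also give simplicity, since no three lines are concurrent and no two are parallel. There are $19\binom{18}{2}$ comparisons, a finite certified computation.

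\textbf{Step 2: the main obstacle, uniformity in $\eps\to0^+$.} The interval $(0,10^{-4})$ is open at $0$. At $\eps=0$ the lines $L_8$ and $L_{15}$ meet $Y_0$ at the same point, so the arrangement degenerates there. I will treat $\eps$ symbolically. Comparisons not involving the triple $\{Y_0,L_8,L_{15}\}$ depend continuously on $\eps$, so they can be checked with $\eps\in[0,10^{-4}]$ as an interval, provided they stay bounded away from $0$ at $\eps=0$. For the degenerate triple, I will compute the crossing $L_8\cap L_{15}$ explicitly:
\[
x=\frac{-\eps(m_8+m_{15})}{m_8-m_{15}},\qquad y=\frac{-2m_8m_{15}\eps}{m_8-m_{15}}.
\]
Both coordinates are linear in $\eps$. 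The order of the three crossings among $Y_0,L_8,L_{15}$ is therefore decided by signs of $\eps$-independent factors, and positivity of $\eps$ fixes them. The remaining subtle comparisons are those of the tiny triangle $Y_0L_8L_{15}$ against other lines. Every other line stays at distance bounded below from the origin, which I will certify by interval bounds at $\eps=0$. Dividing out the factor $\eps$ where needed makes each sign check a uniform interval computation.

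\textbf{Step 3: counting.} With the $\mathrm{O}$-matrix fixed, I enumerate the triples $(i,j,k)$ satisfying the adjacency condition of Lemma~\ref{lem:face-criterion} with $k=3$. A bounded triangle has three vertices that are pairwise-adjacent crossings along all three rows, and boundedness is automatic because the condition forbids the first and last segments. The enumeration should give exactly $107=(19\cdot17-2)/3$ triangles. Restricting the enumeration to triples containing label $0$ gives $17$ triangles touching $Y_0$. Equivalently, I will check that the $18$ entries of row $0$ form $17$ consecutive pairs, each of which closes a triangle.
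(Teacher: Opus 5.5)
Your proposal is correct and follows the paper's proof: you certify by interval arithmetic that the $\mathrm{O}$-matrix \eqref{eq:seed19-omatrix} is realized throughout the parameter box, which gives simplicity, and you then read off the $107$ triangles and the $17$ touching $Y_0$ from Lemma~\ref{lem:face-criterion}. The one variation is at $\eps\to0^+$: the paper splits at $\eps=10^{-12}$ and combines affine-linearity in $\eps$ with a check at $\eps=0$, whereas you evaluate the nondegenerate comparisons over the closed range $\eps\in[0,10^{-4}]$ and factor $\eps$ out of the three comparisons internal to the collapsing triangle $Y_0L_8L_{15}$, which is equally valid and avoids the auxiliary cutoff.
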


The resulting family satisfies the hypotheses of
Proposition~\ref{prop:iterative-step} (up to relabeling), with the special
pair $(L_8,L_{15})$ given by $a_8=-\eps$ and $a_{15}=+\eps$.
Since the iterative construction requires $\eps$ arbitrarily close to $0$,
but our computer-assisted verification (recorded in
Appendix~\ref{sec:certified-package}) uses interval arithmetic on closed
bounded intervals, we split the argument
at an auxiliary cutoff $\SeedNineteenEpsLower$:
Lemma~\ref{lem:interval-realization} covers
$[\SeedNineteenEpsLower,\SeedNineteenEpsUpper]$ by direct computation, and
Lemma~\ref{lem:bbl-match} extends the result to
$(0,\SeedNineteenEpsLower)$ by an affine-linearity argument.

\begin{lemma}[Interval realization of the $\mathrm{O}$-matrix]\label{lem:interval-realization}
For every $\eps\in[\SeedNineteenEpsLower,\SeedNineteenEpsUpper]$ and every
choice of slopes in \eqref{eq:seed19-parameter-box},
the normalized family \eqref{eq:seed19-normalized-family} is a simple affine
arrangement realizing exactly the $\mathrm{O}$-matrix
\eqref{eq:seed19-omatrix}.

In other words, throughout this parameter range, the crossing order along each
line is fixed by the $\mathrm{O}$-matrix, no two lines are parallel,
and no triple intersections occur.
\end{lemma}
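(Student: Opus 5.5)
The plan is to reduce the statement to finitely many strict sign conditions on explicit functions of the parameters, and then certify each condition with interval arithmetic over the whole closed parameter box.

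First, fix the sweep direction. Take $\gamma$ to be a near-vertical line moving in the direction that produces the stated labelling, ordered by increasing angle. The crossing order along line $L_i$ is then the order of the $x$-coordinates of its intersection points (for $Y_0$, this is the order of the $a_i$). For two lines $y=m_i(x-a_i)$ and $y=m_j(x-a_j)$ with $m_i\neq m_j$, the intersection has
\[
x_{ij}=\frac{m_ia_i-m_ja_j}{m_i-m_j}.
\]
For $Y_0$, the crossing with $L_i$ is at $x=a_i$. Realizing the $\mathrm{O}$-matrix \eqref{eq:seed19-omatrix} therefore amounts to the following checks:
\begin{enumerate}[label=(\alph*)]
\item the slopes are pairwise distinct;
\item the initial angular order agrees with the labelling;
\item for every row $i$ and every consecutive pair $j,k$ in that row, $x_{ij}<x_{ik}$ holds strictly (or the reversed inequality, according to the orientation of $\gamma$).
\end{enumerate}
Strict inequalities between consecutive crossings on every line already exclude triple points, since a triple point would force $x_{ij}=x_{ik}$. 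Strict monotonicity along each row fixes the whole order, so simplicity and the exact $\mathrm{O}$-matrix follow together.

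Second, run interval evaluation. I would clear denominators: once (a) is certified and the sign of each $m_i-m_j$ is known, each inequality $x_{ij}<x_{ik}$ becomes a polynomial inequality in $(m_i,m_j,m_k,\eps)$. The constants $\tan(k\pi/18)$ are enclosed by rigorous intervals. The box is then $\prod_i[m_i^\ast-\delta_m,m_i^\ast+\delta_m]\times[10^{-12},10^{-4}]$. For each of the $19\cdot 17$ adjacent pairs, I evaluate the difference in interval arithmetic and check that it excludes $0$ with the correct sign. Where naive evaluation is too wide, I bisect the relevant variables; only three or four variables enter each inequality, so subdivision stays cheap. Condition (a) is immediate, because the $m_i^\ast$ are separated by much more than $2\delta_m$.

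The main obstacle is the pairs that involve both special lines $L_8$ and $L_{15}$ together with $Y_0$, and more generally crossings located near the origin. Since $a_8=-\eps$ and $a_{15}=\eps$, several intersection points collapse to $O(\eps)$ distance from one another, and their differences scale like $\eps$. Over an $\eps$-interval spanning eight orders of magnitude, a naive enclosure cannot certify the sign. I would first try factoring $\eps$ out symbolically: write each such difference as $\eps\cdot g(m,\eps)$ with $g$ polynomial, and certify the sign of $g$ on the box, using subdivision in $\eps$ (for instance geometric, i.e.\ in $\log\eps$) if needed. The $\eps$-independent inequalities, by contrast, are certified once with $\eps$ treated as an interval.
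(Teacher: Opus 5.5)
Your overall method matches the paper's: strict inequalities between consecutive crossings on every line, plus a direction-order chain, certified by interval arithmetic over the closed box, with strictness excluding triple points. The problem is the sweep convention you fix. It is incompatible with the given matrix, so checks (b) and (c) as you state them would fail.

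\emph{The labelling.} No near-vertical sweep produces the labelling $Y_0=0,1,\dots,18$. The slopes decrease strictly along $11,\dots,18,0,1,\dots,10$, so a near-vertical $\gamma$ meets the lines initially in that order or its reverse. The vertical direction cuts the cyclic order of directions between $L_{10}$ (with $m_{10}\approx-23$) and $L_{11}$ (with $m_{11}\approx 4.4$).

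\emph{The row orientation.} The rows of \eqref{eq:seed19-omatrix} are not ordered by $x$ in one common direction.
In row $4$ (where $m_4<0$), label $5$ at $x\approx-10.2$ precedes label $7$ at $x\approx-3.6$.
In row $11$ (where $m_{11}>0$), label $10$ comes first although $L_{11}\cap L_{10}$ lies at $x\approx 0.05$. That is to the right of $L_{11}\cap Y_0$ at $x=a_{11}\approx-2.75$, which comes tenth.
So negative-slope rows run in increasing $x$ and positive-slope rows in decreasing $x$. Any single orientation makes every inequality of rows $11$--$18$ (or of rows $1$--$10$) false. A global reversal ``according to the orientation of $\gamma$'' cannot repair a sign that depends on the row.

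\emph{The fix.} The matrix is produced by a nearly horizontal, slightly tilted sweep moving downward. $Y_0$ is met first, then the $L_i$ in order of increasing $1/m_i$. Row $0$ lists the $a_i$ increasingly, and every other row lists its crossings by decreasing $y$. So (b) must be $1/m_1<\dots<1/m_{18}$. Condition (c) must be $y_{i,\mathrm{O}[i,j]}>y_{i,\mathrm{O}[i,j+1]}$ with $y_{i,j}=m_im_j(a_i-a_j)/(m_i-m_j)$. Equivalently, keep your $x$-comparison but choose its direction row by row according to the sign of $m_i$. With this correction your argument is the paper's.

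\emph{The $\eps$ concern.} The $y$-form largely defuses it. Since $y_{i,j}$ is linear in $a_i-a_j$, each difference is affine in $\eps$ for fixed slopes. By the paper's $\eps=0$ check, only two differences vanish there: those of the triangle $Y_0L_8L_{15}$ collapsing at the origin. Both are visibly $\eps$ times a slope-only factor, e.g.\ $y_{8,0}-y_{8,15}=2\eps\,m_8m_{15}/(m_8-m_{15})$. So plain interval evaluation with $\eps\in[10^{-12},10^{-4}]$ already gives a strictly signed enclosure. In your $x$-form the same factor exists but is hidden by cancellation (for instance in $x_{8,15}-a_8$). There your symbolic extraction of $\eps$ is genuinely needed, but subdivision in $\log\eps$ is not.
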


\begin{proof}
The combinatorial order along $Y_0$ depends on the parameters
$a_i$, which are determined by the construction. It can be checked directly that
this order is compatible with the first row $\mathrm{O}[0]$ of~\eqref{eq:seed19-omatrix},
i.e., $a_{\mathrm{O}[0,j]}$ for $j=1,\dots,18$ forms an increasing sequence.
For every other line $L_i$, the slope $m_i$ is nonzero on the whole
parameter range \eqref{eq:seed19-parameter-box}, so the order of intersection points
along $L_i$ coincides with the order of their $y$-coordinates.

The intersection point of $L_i$ and $L_j$ has $y$-coordinate
\begin{equation}
  y_{i,j}=\frac{m_i m_j(a_i-a_j)}{m_i-m_j},\label{eq:y-coordinate}
\end{equation}
so each row of the $\mathrm{O}$-matrix is equivalent to a list of strict
inequalities comparing these $y$-coordinates. For instance, in row $4$ the label $5$
precedes the label $7$, hence one of the required inequalities is
\[
  \frac{m_4 m_5(a_4-a_5)}{m_4-m_5}
  >
  \frac{m_4 m_7(a_4-a_7)}{m_4-m_7}.
\]
In this way the $\mathrm{O}$-matrix \eqref{eq:seed19-omatrix} can be transformed into a finite system of
strict row-order inequalities
\begin{equation}\label{eq:row-order-ineqs}
  y_{i,\,\mathrm{O}[i,j]} - y_{i,\,\mathrm{O}[i,j+1]} > 0,
  \qquad i=1,\dots,18,\  \  j=1,\dots,17,
\end{equation}
together with the slope-order conditions
\begin{equation}\label{eq:slope-separation}
  \frac{1}{m_1} < \frac{1}{m_2} < \cdots < \frac{1}{m_{18}},
\end{equation}
which order the lines by direction. (Since $1/m_i=\cot\theta_i$, where
$\theta_i\in(0,\pi)$ is the angle that $L_i$ makes with the positive $x$-axis,
and since $\cot$ is monotone on $(0,\pi)$, this chain of conditions
is equivalent to a strict ordering of the angles $\theta_i$.) In particular,
\eqref{eq:slope-separation} guarantees the absence of parallel lines.
Together, \eqref{eq:row-order-ineqs} and \eqref{eq:slope-separation} yield
more than $300$ rational inequalities, which we verify
using the computer-assisted interval-arithmetic calculations recorded in
Appendix~\ref{sec:certified-package} (though in principle this could be done by hand).
It follows that the crossing order along every line is fixed
throughout the range and that no two lines are parallel.

Strict row order also excludes triple intersections: if
$L_i\cap L_j = L_i\cap L_k$, then $y_{i,j}=y_{i,k}$, contradicting
the strict inequality. Together with
\eqref{eq:slope-separation}, this proves simplicity and the exact realization of the
$\mathrm{O}$-matrix \eqref{eq:seed19-omatrix}.
\end{proof}

\begin{lemma}[Triangle counts from the $\mathrm{O}$-matrix]\label{lem:omatrix-consequences}
Every simple affine arrangement realizing the $\mathrm{O}$-matrix
\eqref{eq:seed19-omatrix} has
\begin{enumerate}[label=(\arabic*)]
\item exactly $107$ bounded triangular faces;
\item $Y_0$ touches exactly $17$ triangles.
\end{enumerate}
\end{lemma}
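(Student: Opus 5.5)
The plan is to show that both counts are purely combinatorial functions of the $\mathrm{O}$-matrix, and then compute them by a finite enumeration. The criterion to use is Lemma~\ref{lem:face-criterion}. For a simple arrangement, the crossing order along each line is exactly the corresponding row of the $\mathrm{O}$-matrix. A bounded triangle $(L_a,L_b,L_c)$ exists precisely when three adjacency conditions hold:
\begin{itemize}
\item $b,c$ are adjacent in row $a$;
\item $a,c$ are adjacent in row $b$;
\item $a,b$ are adjacent in row $c$.
\end{itemize}
Adjacent crossings along a line always bound a segment of finite length, so any face cut out this way is automatically bounded. Every bounded triangular face arises this way, and distinct faces give distinct unordered triples, since in a simple arrangement three lines bound at most one triangle. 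Hence $a_3$ equals the number of unordered triples $\{a,b,c\}$ satisfying the three adjacency conditions. This number depends only on \eqref{eq:seed19-omatrix}, so it is the same for every simple arrangement realizing it.

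For the enumeration itself, I would avoid scanning all $\binom{19}{3}$ triples. Each row has $17$ consecutive pairs $(\mathrm{O}[i,j],\mathrm{O}[i,j+1])$. For each such pair $(b,c)$ in row $a$, I would check whether $a,c$ are adjacent in row $b$ and $a,b$ are adjacent in row $c$. Every triangle is then found exactly three times, once from each of its sides. Dividing the total number of hits by three must give $107$. This is a mechanical check on $19\cdot 17$ consecutive pairs, which I would carry out by computer alongside the other certified checks in Appendix~\ref{sec:certified-package}. As a cross-check, I would compare the triangles found with the numbering in Figure~\ref{fig:pseudolines}.

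For the second claim, note that $Y_0$ is line $0$. The triangles touching $Y_0$ are exactly those triangles found from the $17$ consecutive pairs of row $\mathrm{O}[0]$. So one counts the $j\in\{1,\dots,17\}$ for which $(0,\mathrm{O}[0,j],\mathrm{O}[0,j+1])$ satisfies the other two adjacency conditions. The claim is that all $17$ segments of $Y_0$ qualify. For example, the first pair $(1,11)$ requires $0,11$ adjacent in row $1$ and $0,1$ adjacent in row $11$; both hold. I expect the same check to succeed for the remaining $16$ pairs, giving exactly $17$.

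The main obstacle is not conceptual but a matter of rigor in the converse direction of the criterion. I must make sure that a triple passing the three adjacency tests really bounds a face with no other line cutting through it. This holds because every other line would have to cross a side of the triangle, and that would contradict the adjacency of the crossings along that side. Beyond that, the work is careful, reproducible bookkeeping, which is delegated to the verified script.
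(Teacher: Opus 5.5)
Your proposal is correct and follows the paper's own (alternative) route: both counts are reduced via Lemma~\ref{lem:face-criterion} to adjacency checks in the rows of \eqref{eq:seed19-omatrix}, with the full enumeration delegated to the computer verification of Appendix~\ref{sec:certified-package} and cross-checked against Figure~\ref{fig:pseudolines}. Your explicit justification of the converse direction of the face criterion is a welcome addition, and the remaining $16$ consecutive pairs of row $\mathrm{O}[0]$ do indeed pass the check, giving exactly $17$ triangles touching $Y_0$.
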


\begin{proof}
Both counts can be verified directly by inspecting the pseudoline diagram in
Figure~\ref{fig:pseudolines} and checking that the $\mathrm{O}$-matrix
\eqref{eq:seed19-omatrix} encodes the same combinatorial type. Alternatively,
by Lemma~\ref{lem:face-criterion}, bounded faces and their sizes are determined
by adjacency of labels in the rows of the $\mathrm{O}$-matrix; the
computer-assisted verification (Appendix~\ref{sec:certified-package}) carries out
this check and confirms exactly $107$ bounded triangles, of which $17$
touch~$Y_0$.
\end{proof}

\begin{lemma}[Small-parameter extension of the realization]\label{lem:bbl-match}
For every $\eps\in(0,\SeedNineteenEpsUpper)$ and every choice of slopes in the
intervals \eqref{eq:seed19-parameter-box}, the normalized family
\eqref{eq:seed19-normalized-family} is a simple
affine arrangement realizing the $\mathrm{O}$-matrix
\eqref{eq:seed19-omatrix}. In particular,
$\eps<\SeedNineteenEpsUpper<1/36=1/(2\cdot 18)$, so the
small-parameter condition of Remark~\ref{rem:iteration-condition} is satisfied.
\end{lemma}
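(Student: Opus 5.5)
The plan is to reduce the statement to Lemma~\ref{lem:interval-realization} together with one extra check at the endpoint $\eps=0$. The tool is that, once the slopes are fixed, every inequality in \eqref{eq:row-order-ineqs} is affine in $\eps$. Fix slopes $m_1,\dots,m_{18}$ in the box \eqref{eq:seed19-parameter-box}.

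\textbf{Step 1: the $\eps$-independent conditions.} The slope conditions \eqref{eq:slope-separation} do not involve $\eps$, so they hold for all $\eps$ by Lemma~\ref{lem:interval-realization}. The order along $Y_0$ (row $\mathrm{O}[0]$) only compares the values $a_i$. The only $\eps$-dependent ones are $a_8=-\eps$ and $a_{15}=+\eps$. These lie strictly between $-\tan(\pi/18)$ and $\tan(\pi/18)\approx 0.176$, and $-\eps<\eps$ holds for every $\eps>0$. So this row is correct on all of $(0,10^{-4})$.

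\textbf{Step 2: affine dependence of the row inequalities.} For each row $i\ge1$ and each $j$, set
\[
f_{i,j}(\eps)=y_{i,\mathrm{O}[i,j]}-y_{i,\mathrm{O}[i,j+1]} .
\]
By \eqref{eq:y-coordinate}, each $y_{i,k}$ equals $\frac{m_im_k}{m_i-m_k}(a_i-a_k)$. The prefactor is independent of $\eps$, and $a_i-a_k$ is affine in $\eps$. Hence $f_{i,j}(\eps)=f_{i,j}(0)+\eps\,c_{i,j}$ for constants depending only on the slopes.

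Lemma~\ref{lem:interval-realization} gives $f_{i,j}(10^{-12})>0$. An affine function that is nonnegative at $0$ and positive at $10^{-12}$ is positive on all of $(0,10^{-12}]$. So it suffices to show $f_{i,j}(0)\ge 0$ for every $(i,j)$ and every slope vector in the box.

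\textbf{Step 3: the check at $\eps=0$, which I expect to be the main obstacle.} At $\eps=0$ the configuration is degenerate: $Y_0$, $L_8$ and $L_{15}$ all pass through the origin. Consequently some $f_{i,j}(0)$ vanish identically. This happens, for instance, in rows $8$ and $15$, where $y_{8,0}=y_{8,15}=0$ and $y_{15,0}=y_{15,8}=0$, whenever the labels $0$ and $15$ (resp.\ $0$ and $8$) are adjacent. It also happens in row $0$, but that row is already settled in Step 1.

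So I would split the pairs $(i,j)$ into two classes:
\begin{itemize}
\item[(a)] those where $f_{i,j}(0)\equiv 0$ symbolically, i.e.\ both intersection points collapse to the origin; these need nothing beyond the positivity at $10^{-12}$;
\item[(b)] the rest, where I would verify $f_{i,j}(0)>0$ over the whole slope box by the same interval-arithmetic machinery as in Appendix~\ref{sec:certified-package}.
\end{itemize}
If a class-(b) check failed because the margin is tiny, the fallback is to verify directly that $c_{i,j}\ge0$ together with $f_{i,j}(10^{-12})>0$. The worry is whether these closed-box checks at $\eps=0$ actually succeed with the given $\delta_m=10^{-4}$. I expect they do, since away from the origin the degenerate configuration is still combinatorially the limit of the realized one.

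\textbf{Step 4: conclusion.} Steps 1--3 give all strict inequalities \eqref{eq:row-order-ineqs} and \eqref{eq:slope-separation} for $\eps\in(0,10^{-12})$. The same argument as in the proof of Lemma~\ref{lem:interval-realization} then yields simplicity and realization of \eqref{eq:seed19-omatrix}. Combined with that lemma on $[10^{-12},10^{-4}]$, this covers all of $(0,10^{-4})$. Finally, $10^{-4}<1/36$ gives the small-parameter condition of Remark~\ref{rem:iteration-condition}.
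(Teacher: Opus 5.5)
Your proposal is correct and takes essentially the same route as the paper: fix the slopes, observe that each row-order difference is affine in $\eps$, and combine positivity at $\eps=10^{-12}$ (from Lemma~\ref{lem:interval-realization}) with the certified nonnegativity at $\eps=0$, where exactly the two differences in rows $8$ and $15$ (labels $0,15$ and $0,8$ respectively) vanish, while the $Y_0$ row and the slope order are handled separately. One small slip is that your Step~3 fallback has the wrong sign: $f_{i,j}(10^{-12})>0$ carries over to smaller $\eps$ when $c_{i,j}\le 0$, not when $c_{i,j}\ge 0$; this does not affect the argument, because the fallback is never needed once the $\eps=0$ check succeeds on the whole slope box, as the paper's certificate records.
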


\begin{proof}
Lemma~\ref{lem:interval-realization} covers
$\eps\in[\SeedNineteenEpsLower,\SeedNineteenEpsUpper]$. It remains to extend
the realization to $(0,\SeedNineteenEpsLower)$. The crossing order along $Y_0$
and the slope-order conditions \eqref{eq:slope-separation} depend only on
the $a_i$ ordering and the slope intervals, so they are unaffected by
shrinking~$\eps$.

Fix any choice of slopes $m_i$ in the intervals
\eqref{eq:seed19-parameter-box}. Since only $a_8=-\eps$ and $a_{15}=+\eps$
depend on~$\eps$, each row-order difference $y_{i,j}-y_{i,k}$
in~\eqref{eq:row-order-ineqs} is affine-linear in~$\eps$. By the
computer-assisted verification (Appendix~\ref{sec:certified-package}),
at $\eps=0$ and for every choice of slopes in the full slope
box~\eqref{eq:seed19-parameter-box}, exactly two such differences
vanish---corresponding to the triangle touching $Y_0$ that collapses at the
origin---and none becomes negative. Lemma~\ref{lem:interval-realization},
which also holds over the full slope box, shows that all differences are
positive at $\eps=\SeedNineteenEpsLower$. An affine-linear function that is
nonnegative at $0$ and positive at $\SeedNineteenEpsLower$ is positive on the
entire interval $(0,\SeedNineteenEpsLower]$; since both bounds are uniform
over the slope box, the conclusion holds for every admissible choice of
slopes. Combined with the interval verification, the realization extends to
all $\eps\in(0,\SeedNineteenEpsUpper)$.
\end{proof}

\begin{proof}[Proof of Theorem~\ref{thm:main-seed}]
The key intermediate step is to show that the normalized family
\eqref{eq:seed19-normalized-family} realizes the
$\mathrm{O}$-matrix \eqref{eq:seed19-omatrix}, from which the triangle counts
follow.
Lemma~\ref{lem:interval-realization} establishes this realization for every
$\eps\in[\SeedNineteenEpsLower,\SeedNineteenEpsUpper]$ and every choice of
slopes in \eqref{eq:seed19-parameter-box}.
Lemma~\ref{lem:bbl-match} extends it to all
$\eps\in(0,\SeedNineteenEpsUpper)$ by an affine-linearity argument.
Lemma~\ref{lem:omatrix-consequences} then derives the touching condition and the
triangle count from the $\mathrm{O}$-matrix. Together, these three lemmas
establish all parts of the theorem for the full parameter
range~\eqref{eq:seed19-parameter-box}.
\end{proof}

\section{The Optimal Series \texorpdfstring{$18\cdot 2^t+1$}{18·2ᵗ+1}}\label{sec:series}

Since the base configuration of Theorem~\ref{thm:main-seed} already satisfies
the hypotheses of the affine iterative step
(Proposition~\ref{prop:iterative-step}), the construction of
\cite{bartholdi2007} applies directly and yields the following.

\begin{theorem}[Straight-line optimal series for $18\cdot 2^t+1$ lines]\label{thm:straight-series}
For every integer $t\ge 0$, there exists a simple affine arrangement of
$n=18\cdot 2^t+1$ straight lines attaining the affine upper
bound~\eqref{eq:upper-bound}:
\[
  a_3
  =\left\lfloor \frac{n(n-2)}{3}\right\rfloor
  =108\cdot 4^t-1.
\]
\end{theorem}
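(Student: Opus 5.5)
The plan is induction on $t$, with the base configuration of Theorem~\ref{thm:main-seed} as the starting point and Proposition~\ref{prop:iterative-step} together with Remark~\ref{rem:iteration-condition} as the inductive step. The statement to carry along is: for every $t\ge 0$, with $N_t=18\cdot 2^t$, there is a simple affine arrangement $\cA_t=\{Y_0,L_1,\dots,L_{N_t}\}$ of $N_t+1$ lines such that
\begin{itemize}
\item $Y_0$ is $y=0$ and each $L_i$ is $y=m_i(x-a_i)$;
\item $\{a_1,\dots,a_{N_t-2}\}=\{\pm\tan(k\pi/N_t)\mid k=1,\dots,N_t/2-1\}$ and the two remaining intercepts satisfy $-1/(2N_t)<a_{N_t-1}<0<a_{N_t}<1/(2N_t)$;
\item $Y_0$ touches exactly $N_t-1$ triangles;
\item $a_3(\cA_t)=T_t$, where $T_0=107$ and $T_{t+1}=T_t+N_t^2$.
\end{itemize}

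For $t=0$, fix any admissible parameters from \eqref{eq:seed19-parameter-box}, for instance $m_i=m_i^\ast$ and $\eps=10^{-5}$. Relabel $L_8,L_{15}$ as the special pair $L_{17},L_{18}$; the proposition only involves the set of intercepts, so this relabeling is harmless. Theorem~\ref{thm:main-seed} then provides all the hypotheses, since $\eps<10^{-4}<1/36$, $Y_0$ touches $17=N_0-1$ triangles, and there are $107$ triangles.

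For the inductive step, apply Proposition~\ref{prop:iterative-step} with $n=N_t$, which is even. It yields $\cB=\cA_{t+1}$ with $2N_t+1=N_{t+1}+1$ lines, exactly $N_t^2$ more triangles, and $Y_0$ touching $2N_t-1=N_{t+1}-1$ triangles. Remark~\ref{rem:iteration-condition} says $\cB$ again satisfies the hypotheses of the proposition, now with $n=N_{t+1}$, including the normalized form and the tangent intercept set. Since the special intercepts may be taken arbitrarily small, we can also keep $|a|<1/(2N_{t+1})$, which sustains the induction.

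It remains to check the count in closed form. The claim is $T_t=108\cdot 4^t-1$. For $t=0$ this gives $107$. For the step,
\[
T_t+N_t^2 = 108\cdot 4^t-1+324\cdot 4^t = 432\cdot 4^t-1 = 108\cdot 4^{t+1}-1.
\]
Since $n=18\cdot 2^t+1\equiv 1 \pmod 6$, the bound \eqref{eq:upper-bound} is
\[
\frac{n(n-2)-2}{3}=\frac{(18\cdot 2^t+1)(18\cdot 2^t-1)-2}{3}=\frac{324\cdot 4^t-3}{3}=108\cdot 4^t-1,
\]
so the bound is attained. As noted in the introduction, this also equals $\lfloor n(n-2)/3\rfloor$.

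The main point needing care is that the iteration stays valid at every step. Remark~\ref{rem:iteration-condition} requires the special intercepts to be strictly below $1/(2n)$ for the current $n$, and this threshold shrinks as $n$ grows. So I would rely on the remark's assertion that the new special parameters produced by the construction of \cite{bartholdi2007} can be chosen arbitrarily small. No further verification of the $19$-line configuration is needed beyond Theorem~\ref{thm:main-seed}.
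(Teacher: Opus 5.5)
There is a genuine gap, at exactly the point you flag as needing care. You fix $\varepsilon=10^{-5}$ at $t=0$ and then assert that the special intercepts of each new arrangement ``may be taken arbitrarily small,'' so that $|a|<1/(2N_{t+1})$ persists. Remark~\ref{rem:iteration-condition} does not say this. Its ``two special parameters'' are $a_{n-1},a_n$ of the \emph{input} arrangement $\mathcal{A}$, to be chosen small at the outset; the construction does not produce fresh ones.

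In fact it cannot. The lines $L_{n-1},L_n$ remain lines of $\mathcal{B}$ with the same intercepts. Since $|a_{n-1}|,|a_n|<1/(2n)<\tan(\pi/(2n))$, they cannot carry any of the intercepts $\pm\tan(j\pi/(2n))$ required of $\mathcal{B}$. So they are again the special pair of $\mathcal{B}$, which is why the remark's threshold is $1/(2n)$, i.e.\ $1/n$ for the new value of $n$. Hence the special intercepts of every $\mathcal{A}_t$ are the original $\pm\varepsilon$. With $\varepsilon=10^{-5}$, your invariant $\varepsilon<1/(2N_t)$ already fails at $t=12$ ($N_{12}=73728>5\cdot 10^4$), and the argument yields only finitely many members of the series.

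The repair is to reverse the quantifiers, as the paper does. Given the target $t$, choose $\varepsilon$ depending on $t$, e.g.\ $0<\varepsilon<\min(10^{-4},\,1/(36\cdot 2^t))$. Then run your induction for $s=0,\dots,t$ with the invariant $\varepsilon<1/(2N_s)$, which now holds at every stage because $N_s\le N_t$. This is legitimate precisely because Theorem~\ref{thm:main-seed} holds for every $\varepsilon\in(0,10^{-4})$, not merely for $\varepsilon$ bounded away from $0$. Supplying that is the role of Lemma~\ref{lem:bbl-match} and its affine-linearity argument below the cutoff $10^{-12}$, which the paper's proof invokes explicitly when it says $\varepsilon$ can be chosen arbitrarily small.

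With this change, the rest of your argument is correct and matches the paper. That includes the relabeling of $L_8,L_{15}$ as the special pair, the bookkeeping of lines and touched triangles in each step, and the arithmetic $T_t=108\cdot 4^t-1=(n(n-2)-2)/3$.
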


\begin{proof}
By Theorem~\ref{thm:main-seed}, the base configuration $\cA_0$ is a simple
affine arrangement of $19$ straight lines with $a_3(\cA_0)=107$ bounded
triangles and $Y_0$ touching exactly $17$ of them. By
Lemma~\ref{lem:bbl-match}, the realization holds for all
$\eps\in(0,\SeedNineteenEpsUpper)$; in particular, $\eps$ can be chosen
arbitrarily small, so the iteration condition of
Remark~\ref{rem:iteration-condition} is satisfied and
Proposition~\ref{prop:iterative-step} can be applied repeatedly. Writing
$n_t=18\cdot 2^t$, each step produces a simple affine arrangement $\cA_{t+1}$
of $2n_t+1$ straight lines with $n_t^2$ more bounded triangles than $\cA_t$.
Therefore
\[
  a_3(\cA_t)
  =107+\sum_{s=0}^{t-1}(18\cdot 2^s)^2
  =107+324\cdot\frac{4^t-1}{3}
  =108\cdot 4^t-1
  =\frac{(n_t+1)(n_t-1)-2}{3},
\]
which is the upper bound~\eqref{eq:upper-bound} for $n=n_t+1$.
\end{proof}

\section{How the Base Configuration Was Found}\label{sec:search}

This section describes the computer search that produced the base
configuration of Theorem~\ref{thm:main-seed}. The proof does not depend on
this material; we include it to document the methodology.
For the computer-assisted checks used in the proof,
see Appendix~\ref{sec:certified-package}.

The search requires two stages.
First, we enumerated pseudoline arrangements by generating
allowable sequences of permutations realizing the target triangle count
given by the upper bound~\eqref{eq:upper-bound}.
Second, we extracted the corresponding $\mathrm{O}$-matrices and attempted to
find straight-line realizations compatible with the constraints imposed by the
iterative constructions of~\cite{forge1998,bartholdi2007}.
The enumeration at $n=19$ was complete, but completeness is not
required: a single suitable candidate suffices for the proof of
Theorem~\ref{thm:main-seed}.

In general, stretching a pseudoline arrangement means realizing the
crossing orders encoded by its $\mathrm{O}$-matrix. If, in a fixed row, the
label $j$ must precede the label $k$, then the $y$-coordinates~\eqref{eq:y-coordinate}
of the corresponding intersection points
must satisfy an inequality of the form
\begin{equation*}
\frac{a_i-a_j}{s_i-s_j} < \frac{a_i-a_k}{s_i-s_k},
\end{equation*}
or, equivalently, after clearing denominators and taking signs into account,
in a form more suitable for computation,
\begin{equation}
(s_i-s_k)(a_i-a_j) \lessgtr (s_i-s_j)(a_i-a_k),\label{eq:general-order-ineq}
\end{equation}
where $s_i=1/m_i$. Thus the combinatorial realization problem becomes a finite
feasibility problem for row-order inequalities together with the
slope-order conditions~\eqref{eq:slope-separation}.

The present search of base configurations for the iterative construction
in Proposition~\ref{prop:iterative-step} is more restrictive.
It also fixes the $x$-intercept parameters $a_i$, while the reciprocal slope variables
$s_i=1/m_i$ remain unknown. With this choice, every adjacent row-order
comparison becomes linear in the variables $s_i$. The stretching step therefore
reduces to finding a feasible point of the system of linear inequalities
\begin{equation}
\ell_r(s_1,\dots,s_{n-1})\le -\eta,\qquad r=1,\dots,R,\label{eq:linear-order-ineq}
\end{equation}
where the linear forms $\ell_r$ are obtained from~\eqref{eq:general-order-ineq} and
$\eta>0$ is a fixed safety margin. We solve this feasibility problem with the
HiGHS linear optimization package~\cite{huangfu2018dual}, which provides
reliable infeasibility certificates.

This search generated candidate arrangements but was not part of the proof.
After obtaining a floating-point realization, the
normalization script described in Appendix~\ref{sec:certified-package}
determined exact rational parameters and the admissible intervals recorded in
Theorem~\ref{thm:main-seed}.

\section{Discussion}\label{sec:discussion}

The main result of this paper is the discovery of a base configuration for
$n=19$ that yields an infinite straight-line affine series attaining the optimal
number of bounded triangles for every $n=18\cdot 2^t+1$.
The base configuration was found by a systematic computational search with
two goals: to identify base configurations for infinite iteration, and to
detect arrangements that, while not supporting repeated iteration, may still
admit a single realizable iterative step.
The search was restricted to odd values of~$n$, since the iterative
constructions of~\cite{forge1998,bartholdi2007} apply only to odd~$n$;
enumeration of optimal pseudoline arrangements is also simpler in this case.

\textbf{Search for infinite series.}
The table below summarizes the results,
obtained in three stages:
enumeration of projective equivalence classes,
numerical stretchability testing, and
checking compatibility with iterative conditions.

\medskip
\noindent\begin{tabular}{|l|r|r|r|r|r|r|r|}
 \hline
 $n$                  & 15 & 17 & 19         & 21       & 23  & 25, 5-gon defect & 27 \\
 \hline
 Projective classes   & 1  & 3  & 1312       & 18       & 112 & 2324             & 56646 \\
 \hline
 Stretchable classes  & 1  & 3  & $\geq1306$ & $\geq11$ & $\geq51$ & $\geq122$   & $\geq348$   \\
 \hline
 Base configurations  & 1  & 3  & 170        & 0        & 0   & 0                & 0 \\
 \hline
\end{tabular}
\medskip

\emph{Projective classes.}
After enumerating optimal pseudoline arrangements, we identified projectively
equivalent arrangements and counted them only once.

\emph{Stretchable classes.}
Next, for each of the projective classes, we tried to realize the
corresponding pseudoline arrangements by straight lines numerically. We
minimized by gradient descent the sum of squared violations of the row-order
inequalities~\eqref{eq:general-order-ineq}. Thus, the numbers in this row are
only lower bounds: a
negative result may simply mean that the computation did not find a straight-line representative.

\emph{Base configurations.}
Finally, we tested all enumerated optimal arrangements reconstructed
from projective classes for compatibility with the conditions required
for the iterative constructions of~\cite{forge1998,bartholdi2007}.
This check is reduced to the feasibility of the linear
system~\eqref{eq:linear-order-ineq}, so the absence of a solution
can be guaranteed, unlike with the gradient-based stretching procedure.
None of the tested arrangements for $n=21$, $23$, $27$
yielded a base configuration, so these zero entries
should be viewed as stronger negative evidence.
The columns $n=15$ and $n=17$ reproduce known small cases
from the series~\eqref{eq:known-series}.
The case $n=25$ is discussed separately below.

Thus, for $n=21$, $23$, $27$, the
computations provide strong evidence against the existence of new
infinite iterative series of type~\cite{forge1998,bartholdi2007}.

\textbf{Infinite series for $\boldsymbol{n=25}$.}
This case for affine arrangements is already covered by the known
series~\eqref{eq:known-series}, so we focus on the projective extension.
A natural way to produce projective arrangements is to add the line at
infinity. Unbounded faces become bounded, and the number of new triangles
depends on $n\bmod 6$.

For $n\equiv 1\pmod{6}$, the upper bound~\eqref{eq:upper-bound} implies that at least two
bounded segments do not belong to any bounded triangle.
In the affine plane, these \emph{unused segments}
belong to unbounded faces of optimal arrangements (see
$L_{13}L_{11}L_{14}$ and $L_{12}L_{4}L_{13}$ in
Figure~\ref{fig:pseudolines}), so the affine
upper bound is still attained. In the projective plane all faces are bounded,
and the unused segments form faces whose every edge is unused;
we call such faces \emph{defects}. In all optimal
arrangements that we enumerated for $n\equiv 1\pmod{6}$, the defect
is either a single pentagon or a pair of quadrilaterals.

After projective extension, the upper bound for $n$ lines with
$n\equiv 2\pmod{6}$
\begin{equation}\label{eq:blanc-projective-bound}
  p_3(\cA)\le \frac{n(n-1)-5}{3}
\end{equation}
is attained by pseudoline arrangements only in the presence of a
pentagonal defect, and to our knowledge no corresponding straight-line
configuration has been reported~\cite{blanc2011}.
Accordingly, the column $n=25$ in the
table concerns only pentagonal-defect arrangements, since this is the
only subclass potentially yielding a new projective-maximal family.
Within this subclass, none of the enumerated arrangements passes the
iterative compatibility test.

By contrast, for $n\equiv 3,5\pmod{6}$ every bounded segment of an
optimal arrangement is used (no defects arise), and adding the line at
infinity simply produces $n$ new triangles. The additional combinatorial freedom introduced by
defects explains why the counts at $n=19$ are much larger than at
$n=21$ and $n=23$.

\textbf{One-step realizable examples.}
Compatibility with the iterative conditions is a stronger requirement than
ordinary stretchability, since it is designed to support repeated application
of iterative constructions. Its failure therefore rules out an infinite
iterative family of this type, but does not exclude the possibility that one
or more finite iterative steps still produce straight-line realizable
arrangements. Explicit realizable arrangements of
$41$ and $45$ lines, obtained from arrangements of $21$ and $23$ lines after a
single iterative step, are recorded in Appendix~\ref{sec:appendix-one-step}.
Besides these examples, we found a realizable arrangement
of $49$ lines obtained from a $25$-line arrangement with a pentagonal defect
after a single iterative step. After adding the line at infinity, this yields
a projective arrangement with $50$ lines and $815$ triangles, attaining the
projective upper bound \eqref{eq:blanc-projective-bound}.

Finally, we observe that as $n$ increases, the proportion of stretchable
arrangements among the enumerated optimal pseudoline arrangements appears to
decrease. We conjecture that the number of infinite series of
type~\cite{forge1998,bartholdi2007} arising from stretchable
optimal arrangements is finite.

\appendix

\section{Computer-Assisted Verification}\label{sec:certified-package}

The computer-assisted part of the proof consists of two independent
verification steps, each producing a machine-readable certificate.

\textbf{The first step} concerns the geometric realization. Starting from the
line data of a candidate arrangement and the target $\mathrm{O}$-matrix~\eqref{eq:seed19-omatrix},
the normalization script reconstructs the normalized affine
arrangement~\eqref{eq:seed19-normalized-family}
\[
  Y_0:\ y=0,
  \qquad
  L_i:\ y=m_i(x-a_i),
\]
recovers the exact values of the $a_i$ parameters
($\pm\tan(k\pi/18)$, $\pm\eps$), and verifies that the row-order
inequalities~\eqref{eq:row-order-ineqs} and slope-order
conditions~\eqref{eq:slope-separation} hold throughout the
parameter box \eqref{eq:seed19-parameter-box}. The resulting certificate
records:
\begin{itemize}[label=$\triangleright$]
\item the choice of the distinguished line $Y_0$ and the exact labels of the
  $a_i$ parameters;
\item an interval for the special parameter $\eps$;
\item strict ordering of intersections along each line, with a positive
  safety margin;
\item positive slope-separation for every pair of slopes;
\item the direction ordering~\eqref{eq:slope-separation} of the lines,
  verified over the full slope range;
\item the $\eps=0$ checks: substituting $\eps=0$ and evaluating every
  row-order difference using interval arithmetic over the full slope
  range~\eqref{eq:seed19-parameter-box}, exactly two differences vanish
  (the ordering of $Y_0$ and $L_{15}$ along $L_8$, and of $Y_0$ and $L_8$
  along $L_{15}$; all three lines meet at the origin when $a_8=a_{15}=0$)
  and no difference becomes negative;
\item the verdict that the normalized arrangement realizes the target
  $\mathrm{O}$-matrix.
\end{itemize}

\textbf{The second step} concerns the combinatorial properties of the base
configuration. Using the face criterion of Lemma~\ref{lem:face-criterion},
the structural verifier enumerates bounded faces directly from adjacency
relations in the rows of the $\mathrm{O}$-matrix and confirms:
\begin{itemize}[label=$\triangleright$]
\item $Y_0$ touches exactly $17$ triangles;
\item the arrangement has exactly $107$ bounded triangular faces.
\end{itemize}

This separation of steps is not essential. In principle, the combinatorial structure and
triangle counts could be recovered directly from the line equations in a single
verification step. We instead use the $\mathrm{O}$-matrix as an intermediate
object fixing the combinatorial type, so that the two certificates mirror the
two parts of the proof: realization of the $\mathrm{O}$-matrix by the line
arrangement, and verification of the required properties of the
$\mathrm{O}$-matrix itself.

All input data, verification scripts, output certificates,
and detailed format specifications are available at our GitHub repository
\cite{repo}.

\section{One-Step Iterative Examples at \texorpdfstring{$n=41,45,49$}{n=41,45,49}}\label{sec:appendix-one-step}

This appendix records concrete examples of arrangements that
do not satisfy the conditions required for repeated
iteration in the sense of~\cite{forge1998,bartholdi2007}, but for which a
single iterative step may still produce a straight-line realizable arrangement.

Figures~\ref{fig:41-from-21-lines},~\ref{fig:45-from-23-lines},
and~\ref{fig:49-from-25-lines} show the arrangements, and
Tables~\ref{tab:41-coefficients},~\ref{tab:45-coefficients},
and~\ref{tab:49-coefficients} record the line coefficients in the affine form
$y = mx + b$.
The first is a realizable $41$-line arrangement
obtained from a $21$-line arrangement,
with $533$ bounded triangles.
The second is the analogous result for $45$ lines
obtained from a $23$-line arrangement, with $645$ bounded triangles.
The third is a realizable $49$-line arrangement obtained from a $25$-line
arrangement with a pentagonal defect; after adding the line at infinity, it
yields a projective arrangement of $50$ lines attaining the upper
bound~\eqref{eq:blanc-projective-bound}.

The examples use the construction of~\cite{forge1998} rather
than~\cite{bartholdi2007} solely because the former produces more visually
appealing figures. These two constructions are related by a projective
transformation, so the construction of~\cite{bartholdi2007} could be applied
equally well.

The line coefficients listed in the tables can be pasted into an online
viewer that allows zooming in on the arrangements and inspecting polygon
statistics. The viewer, along with a gallery of further examples, is
available at~\cite{repo}.

\begin{figure}[!hp]
\centering
\input{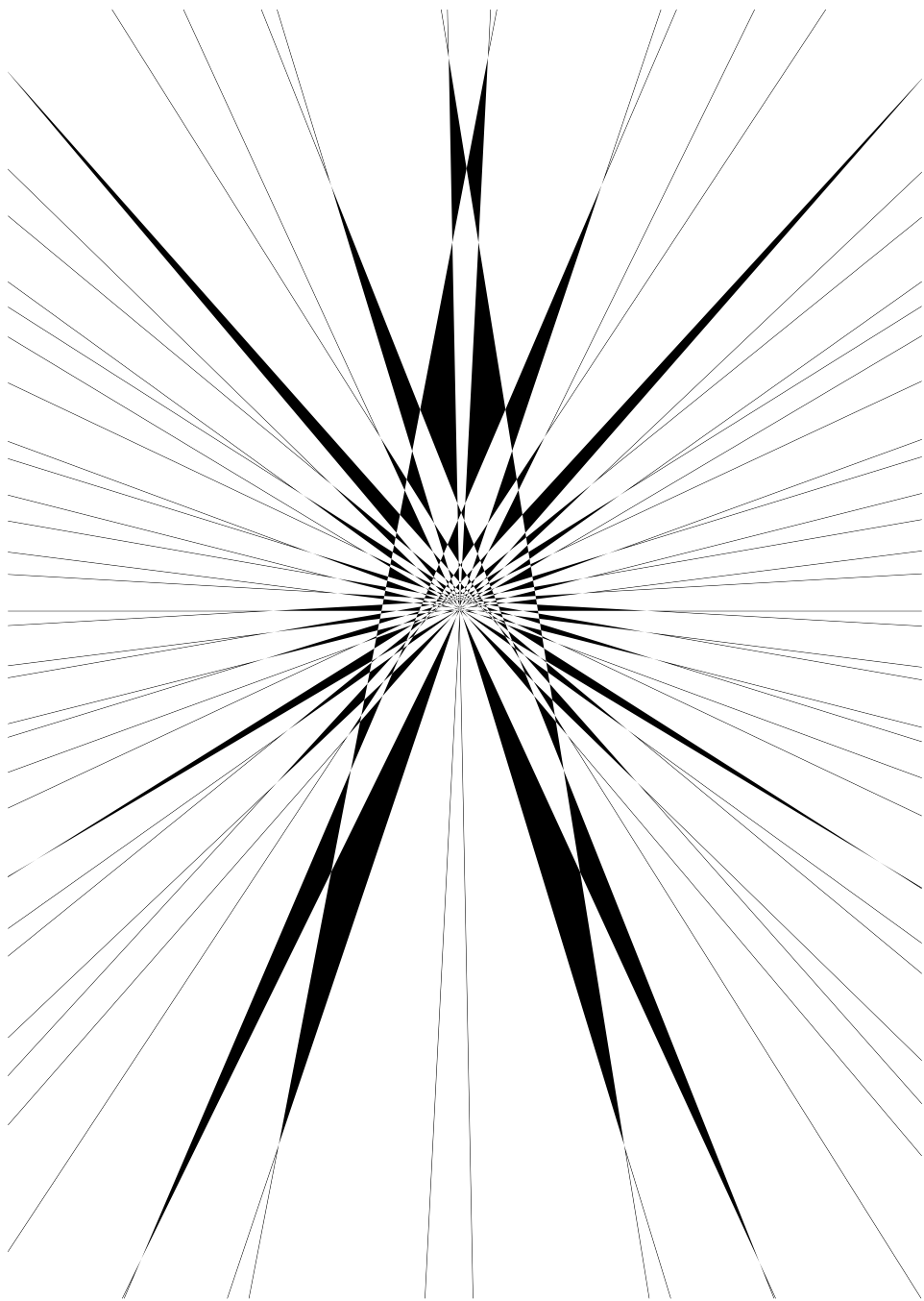}
\caption{A realizable $41$-line arrangement with $533$ bounded triangles,
matching the upper bound $\lfloor n(n-2)/3 \rfloor$, obtained
from a $21$-line arrangement after one iterative step of~\cite{forge1998}.}
\label{fig:41-from-21-lines}
\end{figure}

\begin{figure}[!hp]
\centering
\captionof{table}{Line coefficients for the $41$-line arrangement of
Figure~\ref{fig:41-from-21-lines}.}\label{tab:41-coefficients}
\small
\begin{verbatim}
m,b
19.6440380158708,-0.578845642445838
5.19983686759458,46.2669483580472
2.97337242178767,-0.255654502748468
2.33409554657142,10.2073308592064
2.03913847183038,-0.634409531311398
1.51939067684971,4.44408039396207
1.14469295714126,-0.383673607397969
1.09198915528216,2.46780787838117
0.947707289018028,-0.581595297292468
0.80896478221994,1.52592194561986
0.708319546739286,-0.46596899057913
0.624766449916582,1.07709697251601
0.592073396328359,-0.522334544621754
0.467419996326499,0.815211228257588
0.364476352936825,-0.358176891237224
0.307097716326852,0.627325466449936
0.252224697244642,-0.606243665819936
0.173239760811204,0.517542416022027
0.125712170009275,-0.500504814655953
0.0567903240132735,0.473964134108348
0,-0.747810827880863
-0.0566407268373554,0.474709003288992
-0.126026378587329,-0.502668706579321
-0.173988094834067,0.520070556147547
-0.25426442214236,-0.611448868231776
-0.309876980168933,0.632838712396089
-0.368480699536813,-0.362861640209878
-0.474087518144302,0.826370333485319
-0.603198691333035,-0.533094500199415
-0.636848012086688,1.09721398633079
-0.724090133958835,-0.477608081683738
-0.829471506779334,1.56371844343208
-0.97587461794496,-0.60078764216201
-1.12998251203546,2.55282240379824
-1.18598949150119,-0.399745439534234
-1.59358094120356,4.65965510727435
-2.17751680508347,-0.680853322380702
-2.51510426865305,10.9989476990021
-3.27532346737191,-0.286935223516981
-6.19714088202275,55.1771409818031
-50.5023827311972,-1.56858159736557
\end{verbatim}
\end{figure}

\begin{figure}[!hp]
\centering
\input{figures/45-from-23-lines.tex}
\caption{A realizable $45$-line arrangement with $645$ bounded triangles,
matching the upper bound $\lfloor n(n-2)/3 \rfloor$, obtained
from a $23$-line arrangement after one iterative step of~\cite{forge1998}.}
\label{fig:45-from-23-lines}
\end{figure}

\begin{figure}[!hp]
\centering
\captionof{table}{Line coefficients for the $45$-line arrangement of
Figure~\ref{fig:45-from-23-lines}.}\label{tab:45-coefficients}
\small
\begin{verbatim}
m,b
1.53430417945565,0.20403146857486
-0.519717793360893,-1.0123770007636
1.22463734160685,0.361302797408153
-0.682592554218052,-1.18681219034012
0.891854978259324,0.316083666124596
-0.870047509959627,-1.47843485760808
0.651476460231005,0.418124154690683
-1.09636719038685,-1.97015219221493
0.396929305083391,0.286795175281954
-1.41989120786806,-2.83983360068664
0.272556432493848,0.499073931073725
-1.87010286558218,-4.37575147679071
0.108540530144518,0.413263071246813
-2.5103136907933,-7.37929773678778
0,0.610545074014735
-3.52443245742084,-15.0932528519642
-0.15596198360546,0.355345732571265
-7.25076505035506,-63.4720982341358
-0.274318853521862,0.389406637042344
-0.393772372893695,0.27252089035818
6.11597424517132,-61.5713675655963
-0.584242811465604,0.433463267312579
2.99158206439091,-15.7352961692028
-0.724111708173206,0.181281981043461
1.96411862253783,-7.32581355453173
-1.08905653302851,0.461023808411459
1.37117853979795,-4.18294030008071
-1.29134149027236,0.418287739507626
0.993450056980779,-2.69385966502562
-1.74197588871076,0.717639658599045
0.746886415976471,-1.9669652534109
-1.89849013127593,0.349476299783515
0.538492493879797,-1.52059345557914
-2.7586087357052,0.658112245930578
0.351699573604613,-1.23335298576353
-3.91743259598345,-0.0661110689538446
0.170375461501831,-1.03320644694924
-47.3722474445319,1.43579772315994
0.0203941950811925,-0.914576078806777
29.6728308554765,1.4775805773327
-0.104237468278218,-0.868327950312221
3.68039533050188,-0.083492590289976
-0.237082763091935,-0.868846495493857
2.41186113356395,0.327174404614809
-0.37333249578809,-0.912399480160171
\end{verbatim}
\end{figure}

\begin{figure}[!hp]
\centering
\input{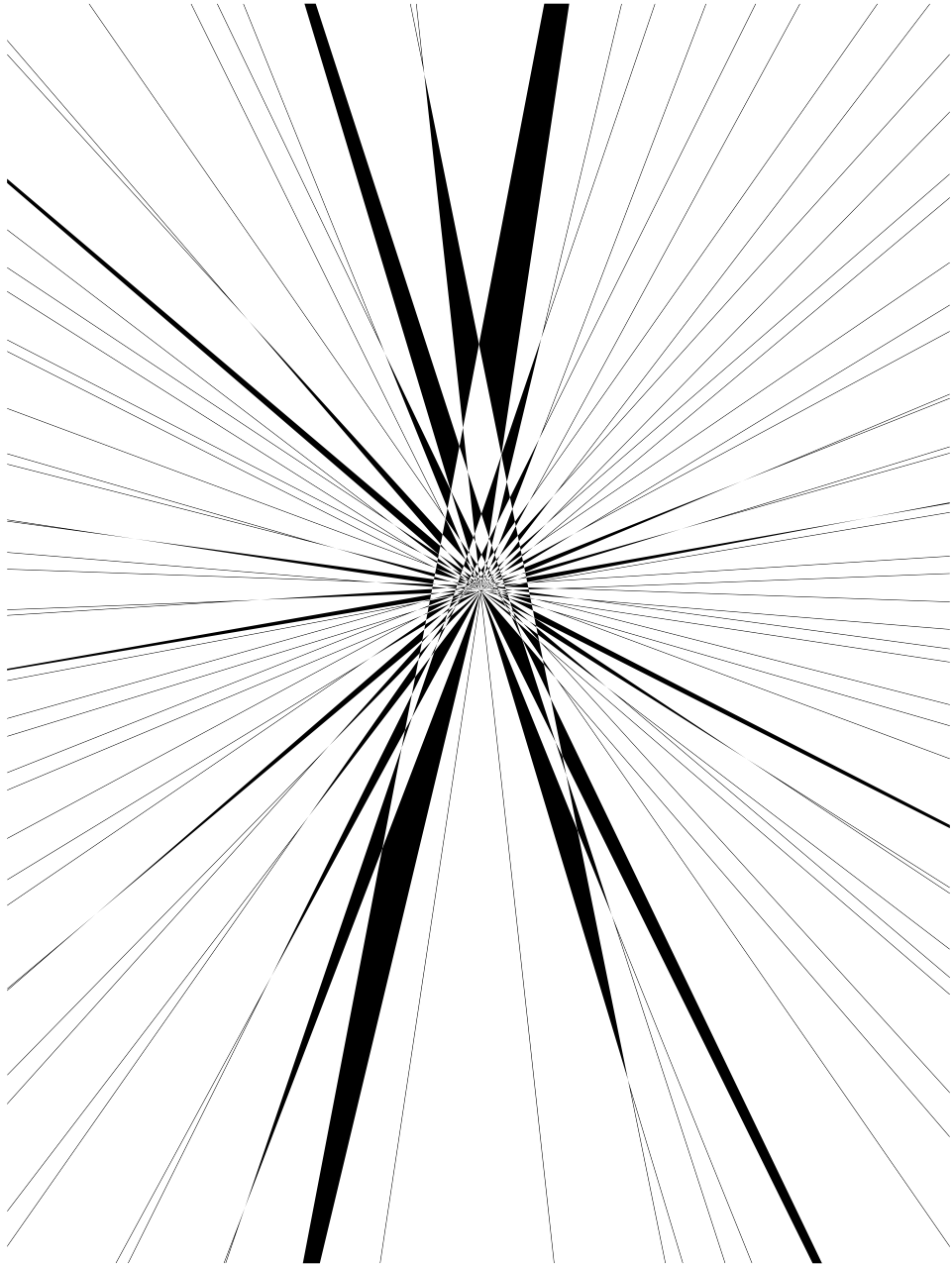}
\caption{A realizable $49$-line arrangement with $766$ bounded triangles, obtained
from a $25$-line arrangement after one iterative step of~\cite{forge1998}.
After adding the line at infinity, one obtains a projective arrangement of
$50$ lines with $815$ triangles, attaining the upper bound
\eqref{eq:blanc-projective-bound}.}
\label{fig:49-from-25-lines}
\end{figure}

\begin{figure}[!hp]
\centering
\captionof{table}{Line coefficients for the $49$-line arrangement of
Figure~\ref{fig:49-from-25-lines}.}\label{tab:49-coefficients}
\small
\begin{verbatim}
m,b
-0.055378708076769124,1.054138063855369
-0.13039037461054212,-1.3959980643410135
-0.21628638153113777,1.0485766871752571
-0.24086463229360836,-1.5686114574697163
-0.40728521659108335,1.104865372689265
-0.46050721241195647,-1.2797443053912798
-0.59296182477965476,1.2708287865048573
-0.80761823680704803,-1.8281090060597618
-0.82074547973660095,1.5959659056220838
-1.0092764439776607,-1.8995108078761407
-1.0593634243657601,2.1374135080387169
-1.2180369677336167,-2.1666035521700011
-1.3526950624000658,2.9342977336359803
-1.3731070875197393,-2.0864498484267684
-1.757707592910895,4.5314353597917485
-1.856336775766769,-2.532829936322365
-2.3040072504210496,7.4246690857667179
-3.2354100193383184,-3.2355509103981555
-3.3216826833550961,15.017640507339127
-3.9514399140639491,-3.8614379126460276
-4.9129277341774316,34.064661882568203
-5.3079993119522983,-4.3878258574257947
-7.879132603103729,109.73470101026696
-14.582890498423433,-10.423512074894735
10.612606135454183,4.9151172487244725
8.326098553648098,128.92689110222864
6.6653341495181158,2.890620690589238
4.6808343059033373,38.32102413232856
4.1688617163513468,1.0427812887831018
3.2019980904994862,17.661308614880117
2.9378704370314499,0.57731234402339238
2.2781860768550528,9.9883094757838897
2.0882829751522372,-0.12126164232259505
1.776996132803764,6.622003669091149
1.6306629541874376,-0.25310827362414134
1.3822593027651811,4.5427756765404954
1.3393818644170152,-0.52991819279353858
1.0884238685766567,3.3338126530893191
0.97302233979402408,-0.63287767614748036
0.85855252347104893,2.5688173628027569
0.67040292676335056,-0.92853611857421647
0.63372188108844307,1.9802278283237849
0.49021153715327626,-0.8845054450885611
0.44929357607535608,1.5962758487632895
0.30115944415503992,-1.1588841863417918
0.27985256875264214,1.3374317593155995
0.26786304416444695,-1.1498380839111284
0.10028277364423278,1.1425695136620051
0.061398603131372152,-1.5052069980467306
\end{verbatim}
\end{figure}

\clearpage
\bibliographystyle{unsrt}
\bibliography{refs}

\begin{thebibliography}{1}

\bibitem{oeis}
OEIS~Foundation Inc.
\newblock Sequence {A006066}: {K}obon triangles.
\newblock \url{https://oeis.org/A006066}, 2026.
\newblock Accessed: April 2026.

\bibitem{furedi1984}
Z.~F{\"u}redi and I.~Pal{\'a}sti.
\newblock Arrangements of lines with a large number of triangles.
\newblock {\em Proceedings of the American Mathematical Society},
  92(4):561--566, 1984.

\bibitem{forge1998}
D.~Forge and J.~L. Ram{\'i}rez~Alfons{\'i}n.
\newblock Straight line arrangements in the real projective plane.
\newblock {\em Discrete \& Computational Geometry}, 20:155--161, 1998.

\bibitem{bartholdi2007}
N.~Bartholdi, J.~Blanc, and S.~Loisel.
\newblock On simple arrangements of lines and pseudo-lines in {$P^2$} and
  {$R^2$} with the maximum number of triangles.
\newblock In {\em Surveys on Discrete and Computational Geometry: Twenty Years
  Later}, volume 453 of {\em Contemporary Mathematics}, pages 105--116.
  American Mathematical Society, 2008.

\bibitem{blanc2011}
J.~Blanc.
\newblock The best polynomial bounds for the number of triangles in a simple
  arrangement of n pseudo-lines.
\newblock {\em Geombinatorics}, 21:5--17, 2011.

\bibitem{huangfu2018dual}
Q.~Huangfu and J.~A.~J. Hall.
\newblock Parallelizing the dual revised simplex method.
\newblock {\em Mathematical Programming Computation}, 10(1):119--142, 2018.

\bibitem{repo}
R.~Parpalak and D.~Utkin.
\newblock Verification data and scripts for the $18\cdot 2^t+1$
  triangle-maximal series.
\newblock \url{https://github.com/parpalak/triangle-maximal-18-series}, 2026.
\newblock Snapshot at commit \texttt{446e782}. Accessed: April 2026.

\end{thebibliography}

\end{document}